\documentclass[11pt,a4paper,openright,oneside]{article}
\usepackage{amsfonts, amsmath, amssymb,latexsym, amsthm, mathrsfs, enumerate}
\usepackage[english]{babel}
\usepackage{epsfig}
\usepackage{xcolor}
\usepackage{mathtools}
\usepackage[new]{old-arrows}
\usepackage{hyperref}
\usepackage{authblk}

\parskip=5pt
\parindent=15pt
\usepackage[margin=1.2in]{geometry}
\usepackage{graphicx}
\usepackage{listings}
\usepackage[latin1]{inputenc}

\setcounter{page}{1}

\newcommand\restrict[1]{\raisebox{-.5ex}{$|$}_{#1}}

\makeatletter
\providecommand\dotbigcup{\mathpalette\@barred\cdot}
\def\@barred#1#2{\ooalign{\hfil$#1\bigcup$\hfil\cr\hfil$#1#2$\hfil\cr}}

\makeatother

\numberwithin{equation}{section}
\newtheorem{teo}{Theorem}[section]
\newtheorem*{teo*}{Theorem}
\newtheorem*{prop*}{Proposition}
\newtheorem*{corol*}{Corollary}

\newtheorem{lemma}[teo]{Lemma}

\newtheorem{conjecture}[teo]{Conjecture}

\theoremstyle{definition}

\newtheorem{remark}[teo]{Remark}

 \theoremstyle{definition}

\def\qedblack{\hfill $\blacksquare$}

% --------------------------------------------------
\usepackage{fancyhdr}

\lhead{}
\lfoot{}
\rhead{}
\cfoot{}
\rfoot{\thepage}

\title{Colouring Complete Multipartite and
Kneser-type Digraphs}
\author{Ararat Harutyunyan\thanks{\texttt{ararat.harutyunyan@lamsade.dauphine.fr}}~~~~~~~~Gil Puig i Surroca\thanks{\texttt{gil.puig-i-surroca@dauphine.eu}}}
\affil{LAMSADE, Universit\'e Paris Dauphine - PSL \\ 75775 Paris Cedex 16, France}

\begin{document}
\maketitle

\begin{abstract}
The \emph{dichromatic number} of a digraph $D$ is the smallest $k$ such that $D$ can be partitioned into $k$ acyclic subdigraphs, and the dichromatic number of an undirected graph is the maximum dichromatic number over all its orientations. Extending a well-known result of Lov\'{a}sz, we show that the dichromatic number of the Kneser graph $KG(n,k)$ is $\Theta(n-2k+2)$ and that the dichromatic number of the Borsuk graph $BG(n+1,a)$ is $n+2$ if $a$ is large enough. 
We then study the list version of the dichromatic number. We show that, for any $\varepsilon>0$ and $2\leq k\leq n^{\frac{1}{2}-\varepsilon}$, the list dichromatic number of $KG(n,k)$ is $\Theta(n\ln n)$. 
This extends a recent result of Bulankina and Kupavskii %\cite{BulankinaKupavskii2022} 
on the list chromatic number of $KG(n,k)$, where the same behaviour was observed. We also show that for any $\rho>3$, $r\geq 2$ and $m\geq\max\{\ln^{\rho}r,2\}$, the list dichromatic number of the complete $r$-partite graph with $m$ vertices in each part is $\Theta(r\ln m)$, extending a classical result of Alon.
%\cite{Alon1992}
Finally, we give a directed analogue of Sabidussi's theorem on the chromatic number
of graph products.

%We present bounds for the dichromatic number of Kneser graphs and Borsuk graphs, and for the list dichromatic number of complete multipartite graphs certain classes of Kneser graphs and complete multipartite graphs. The bounds presented are sharp up to a constant factor. Additionally, we give a directed analogue of Sabidussi's theorem on the chromatic number of graph products.
\end{abstract}

\begin{center} 
\small{\textbf{2010 Mathematics Subject Classification:} 05C15, 05C20, 05C69, 05C76}
 
\small{\textbf{Keywords:} dichromatic number,
list dichromatic number,
complete multipartite graphs, Kneser graphs, Borsuk graphs}
\end{center} 

\section{Introduction}

We consider graphs/digraphs without loops or multiple edges/arcs. They are all finite unless otherwise specified. A \emph{proper $k$-colouring} of an undirected graph $G=(V,E)$ is a mapping $f:V\rightarrow [k]=\{1,...,k\}$ such that $f^{-1}(i)$ is an independent set for every $i\in[k]$. The \emph{chromatic number} of $G$, denoted by $\chi(G)$, is the minimum $k$ for which $G$ has a proper $k$-colouring. %{\color{orange}[The analogue of independent sets in directed graphs are \emph{acyclic sets}, i.e., sets without directed cycles. This way, a]}
A \emph{proper $k$-colouring} of a digraph $D=(V,A)$ is a mapping $f:V\rightarrow [k]$ such that $f^{-1}(i)$ is acyclic (i.e.~the subdigraph induced by $f^{-1}(i)$ has no directed cycles) for every $i\in[k]$, and the \emph{dichromatic number} of $D$, denoted by $\vec\chi(D)$, is the minimum $k$ for which $D$ has a proper $k$-colouring. Note that this definition generalizes the usual colouring, in the sense that the chromatic number of a graph is equal to the dichromatic number of its corresponding bidirected digraph. The notion was introduced by Neumann-Lara in 1982 \cite{Neumann-Lara1982} and it was later rediscovered by Mohar \cite{Mohar2003}. Since then, it has been shown that many classical results hold also in this setting \cite{Betal04,HM11b,HM11a,HM11c}. However, some fundamental questions remain unanswered. The \emph{dichromatic number} of an undirected graph $G$, denoted by $\vec\chi(G)$, is the maximum dichromatic number over all its orientations. Erd{\H o}s and Neumann-Lara conjectured the following.
\begin{conjecture}\label{conj:E-NL}\emph{\cite{Erdos1979}} For every integer $k$ there exists an integer $r(k)$ such that $\vec\chi(G)\geq k$ for any undirected graph $G$ satisfying $\chi(G)\geq r(k)$.
\end{conjecture}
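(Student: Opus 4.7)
Conjecture~\ref{conj:E-NL} is the celebrated Erd\H{o}s--Neumann-Lara conjecture, which remains open; what follows is a plan of attack rather than a completed proof. The natural approach is a reduction to ``canonical'' high-dichromatic subgraphs. The abstract lists three families of undirected graphs whose dichromatic number is known to grow without bound: Kneser graphs $KG(n,t)$, Borsuk graphs $BG(n+1,a)$ and complete $r$-partite graphs $K_{m,\ldots,m}$. Fixing any one of these as a target $H_k$ with $\vec\chi(H_k)\ge k$, it would suffice to show that for some function $r(k)$, every graph $G$ with $\chi(G)\ge r(k)$ contains $H_k$ as a subgraph: orient the copy of $H_k$ to witness $\vec\chi\ge k$, orient the remaining edges arbitrarily, and use that adding arcs cannot decrease the dichromatic number.

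Among the three candidates, $K_{m,\ldots,m}$ looks most tractable: the abstract's multipartite theorem needs only $m\ge\max\{\ln^{\rho}r,2\}$, so the required parts are merely polylogarithmic in $r$. Thus I would first attempt to prove that $\chi(G)$ sufficiently large forces a balanced complete multipartite subgraph of these modest dimensions, using as the principal tools Erd\H{o}s--Rado canonical partitioning and iterated applications of the Erd\H{o}s--Stone theorem to successive colour classes of a hypothetical colouring of $G$.

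The principal obstacle is that high chromatic number does not force dense substructures: triangle-free graphs with arbitrarily large chromatic number exist (Mycielski, Erd\H{o}s, and the Kneser and Borsuk constructions themselves), and contain no $K_{2,2}$ whatsoever, so the reduction above fails outright in the sparse regime. A successful proof must therefore either (i) replace ``contains $H_k$ as a subgraph'' by a more flexible notion---perhaps a topologically defined substructure descended from the Borsuk framework developed in the paper---that persists in sparse high-chromatic graphs, or (ii) bypass subgraph extraction entirely and analyse a random or carefully chosen orientation of $G$ directly, bounding $\vec\chi(G)$ from below by a moment argument on short directed cycles in colour classes. It is precisely this sparse case, exemplified by shift graphs and Kneser--Mycielski iterates, where every previous attack has stalled, and it would be the hard part of any serious attempt.
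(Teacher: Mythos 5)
This statement is the Erd\H{o}s--Neumann-Lara conjecture and, as the paper itself emphasizes (``it is already unknown whether $r(3)$ exists''), it is open; the paper contains no proof of it, so there is nothing to compare your attempt against. You correctly recognize this and present a plan of attack rather than a claimed proof, and your self-critique is essentially the right one: high chromatic number cannot force a balanced complete multipartite subgraph, so the reduction to $K_{m\ast r}$, Kneser or Borsuk graphs is dead on arrival. The paper's results on those families (Theorems~\ref{teo:Kneser}, \ref{teo:Borsuk}, \ref{teo:directed_multipartite}) show that \emph{those particular} high-chromatic graphs have high dichromatic number, providing evidence for the conjecture, but they do not and cannot supply a universal subgraph to extract from an arbitrary high-chromatic $G$.

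One small inaccuracy: triangle-free does not mean $K_{2,2}$-free (Mycielski graphs and $KG(n,2)$ are triangle-free yet contain $C_4$), so the examples you cite do not quite support the claim ``contain no $K_{2,2}$ whatsoever.'' The obstruction you want is the Erd\H{o}s probabilistic construction of graphs with arbitrarily large girth \emph{and} chromatic number: those really do exclude every fixed bipartite pattern, and they rule out any approach based on finding dense substructures. Your suggestion (ii)---analysing a random orientation of $G$ directly and bounding monochromatic directed cycles---is closer to how partial results (e.g.\ Mohar--Wu's fractional version) have actually been obtained, but for the integral conjecture this too is stuck, precisely in the sparse high-girth regime you flag. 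In short: your assessment of the landscape is accurate, the plan in its current form has a known fatal gap, and no proof is expected here.
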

For instance, $r(1)=1$ and $r(2)=3$. But it is already unknown whether $r(3)$ exists. Mohar and Wu \cite{MoharWu2016} managed to prove the fractional analogue of Conjecture~\ref{conj:E-NL}. Providing further evidence for the conjecture, they showed that Kneser graphs with large chromatic number have large dichromatic number. Improving their bound, we show that the dichromatic number of Kneser graphs is of the order of their chromatic number. %{\color{blue}[We then study the dichromatic number of Borsuk graphs.$\rightarrow$maybe put it just on the last paragraph of the intro?]}

%{\color{orange}[Finding the chromatic number of Kneser graphs was an open problem for more than two decades \cite{Kneser1955,Ziegler2021}. The famous resolution by L\'ovasz \cite{Lovasz1978} was inspired by the analogy between Kneser graphs and Borsuk graphs. We show that the chromatic particularities of Borsuk graphs are preserved in the directed setting; indeed, their chromatic and dichromatic numbers coincide.]}

In the 1970s Erd{\H o}s, Rubin and Taylor~\cite{ERT1979}, and, independently, Vizing~\cite{Vizing1976}, introduced the list variant of the colouring problem, which can be carried over to the directed setting as well. A \emph{$k$-list assignment} to a graph $G$ (or a digraph $D$) with vertex set $V$ is a mapping $L:V\rightarrow\binom{\mathbb Z^+}{k}=\{X\subseteq\mathbb Z^+\mid |X|=k\}$. A colouring (a mapping) $f:V\rightarrow\mathbb Z^+$ is said to be \emph{accepted} by $L$ if $f(v)\in L(v)$ for every $v\in V$ (or just to be \emph{acceptable} if the list assignment is understood). $G$ (resp.~$D$) is \emph{$k$-list colourable} %{\color{blue}[find a good common terminology]} 
if every $k$-list assignment accepts a proper colouring. The \emph{list chromatic number} (or the \emph{choice number}) of $G$ (resp.~the \emph{list dichromatic number} of $D$), denoted by  $\chi_{\ell}(G)$ (resp.~$\vec\chi_{\ell}(D)$), is the minimum $k$ such that $G$ (resp.~$D$) is $k$-list colourable. Similarly, the \emph{list dichromatic number} of $G$, denoted by $\vec\chi_{\ell}(G)$, is the maximum list dichromatic number over all its orientations. Bensmail, Harutyunyan and Le \cite{BHL2018} gave a sample of instances where the list dichromatic number of digraphs behaves as its undirected counterpart. Recently, Bulankina and Kupavskii \cite{BulankinaKupavskii2022} determined up to a constant factor the list chromatic number of a significant fraction of Kneser graphs. We show that their list dichromatic number is of the same order as the list chromatic number. %Dense Kneser graphs have some similarities with complete multipartite graphs. The list chromatic number of complete uniform multipartite graphs was found, up to a constant factor, by Alon \cite{Alon1992}. We prove that, also in this case, the list dichromatic number is of the order of the list chromatic number.

The paper is organised as follows. In Section 2, we prove that 
Kneser graphs have large dichromatic number (at least a multiplicative constant
of their chromatic number) and we show that the general lower bound for the chromatic number of Borsuk graphs is also a general lower bound for their dichromatic number. In Section 3, we study the list dichromatic number
of the complete multipartite graph $K_{m*r}$, determining its asymptotics
by extending a classical result of Alon \cite{Alon1992}. Then we extend Bulankina and Kupavskii's result on the list chromatic number of Kneser graphs to
the list dichromatic number. Finally, in Section 4, we prove a directed analogue of Sabidussi's theorem on the chromatic number of Cartesian products of graphs.

\section{The dichromatic number of Kneser graphs and Borsuk graphs}

The \emph{Kneser graph} with parameters $n,k$, denoted by $KG(n,k)$, is the undirected graph with vertex set $\binom{[n]}{k}$ where two vertices $u,v$ are adjacent if and only if $u\cap v=\emptyset$. It is well-known \cite{Greene2002,Lovasz1978,Matousek2003} that $\chi(KG(n,k))=n-2k+2$ for $1\leq k\leq\frac{n}{2}$, as conjectured by Kneser \cite{Kneser1955,Ziegler2021}.

Mohar and Wu showed that, if $k$ is not too close to $\frac{n}{2}$, the dichromatic number of $KG(n,k)$ is unbounded as well. More precisely, they proved the following.

\begin{teo}\label{thm:MW}\emph{\cite{MoharWu2016}} For any positive integers $n,k$ with $1\leq k\leq\frac{n}{2}$ we have
\[\vec\chi(KG(n,k))\geq \left\lfloor\frac{n-2k+2}{8\log_2\frac{n}{k}}\right\rfloor.\]
\end{teo}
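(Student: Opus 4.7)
The plan is to prove the bound via the standard estimate $\vec{\chi}(D)\geq|V(D)|/\alpha^*(D)$, where $\alpha^*(D)$ denotes the maximum order of an induced acyclic subdigraph of $D$, and to exhibit an orientation $\vec{D}$ of $KG(n,k)$ with small $\alpha^*(\vec{D})$ by a probabilistic argument.

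Let $\vec{D}$ be the uniformly random orientation of $KG(n,k)$, obtained by orienting every edge independently in each direction with probability $1/2$. For a family $\mathcal{F}\subseteq\binom{[n]}{k}$ with $|\mathcal{F}|=m$ and $e(\mathcal{F})$ disjoint pairs, a union bound over the $m!$ possible topological orderings yields
\[
\Pr[\vec{D}[\mathcal{F}]\text{ is acyclic}]\leq m!\cdot 2^{-e(\mathcal{F})}.
\]
The goal is to choose $m=\Theta\!\left(\binom{n}{k}\log_2(n/k)/(n-2k+2)\right)$ and show $\sum_{|\mathcal{F}|=m}m!\cdot 2^{-e(\mathcal{F})}<1$, which would give $\alpha^*(\vec{D})<m$ with positive probability and hence $\vec{\chi}(KG(n,k))\geq\binom{n}{k}/m$, matching the theorem up to the constant.

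Via Stirling, this amounts to requiring $e(\mathcal{F})\geq c\cdot mk\log(n/k)$ for some absolute constant $c$. The expectation $\mathbb{E}[e(\mathcal{F})]=\Theta(m^2\binom{n-k}{k}/\binom{n}{k})$ for a uniformly random $m$-subset comfortably exceeds this threshold, so \emph{typical} $m$-subsets cause no problem.

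The obstacle is that the union bound is driven by the \emph{minimum} of $e(\mathcal{F})$: by the Erd{\H o}s-Ko-Rado theorem, intersecting families of size up to $\binom{n-1}{k-1}$ realize $e(\mathcal{F})=0$. The saving feature is that the target $m$ substantially exceeds $\binom{n-1}{k-1}$ whenever $k$ stays bounded away from $n/2$; stability versions of Erd{\H o}s-Ko-Rado of Hilton-Milner type should then force $e(\mathcal{F})$ to grow once $|\mathcal{F}|$ pushes past this threshold. I would accordingly split the union bound into a ``typical'' part, handled by the calculation above, and a ``near-star'' part, controlled by counting $m$-subsets close to a single star and bounding their contribution via stability. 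Making this dichotomy sharp enough, uniformly in $k\leq n/2$, to recover the explicit factor of $8$ in the denominator is the delicate point.
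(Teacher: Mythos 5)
Your plan hinges on the inequality $\vec\chi(D)\geq|V(D)|/\alpha^*(D)$, and this cannot give the theorem. Every independent set of a graph $G$ is acyclic in every orientation of $G$, so for any orientation $\vec D$ of $KG(n,k)$ one has $\alpha^*(\vec D)\geq\alpha(KG(n,k))=\binom{n-1}{k-1}$ by Erd\H os--Ko--Rado, and hence
\[
\frac{|V(\vec D)|}{\alpha^*(\vec D)}\;\leq\;\frac{\binom{n}{k}}{\binom{n-1}{k-1}}\;=\;\frac{n}{k}.
\]
For $k=cn$ with a fixed $0<c<\tfrac12$, this ceiling is the constant $1/c$, whereas $\lfloor(n-2k+2)/(8\log_2(n/k))\rfloor=\Theta(n)$ grows linearly. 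More generally, $n/k$ drops below the Mohar--Wu bound as soon as $k(n-2k+2)>8n\log_2(n/k)$, which already happens once $k$ is a sufficiently large constant multiple of $\log n$. The Erd\H os--Ko--Rado families you correctly flag are therefore not a delicate nuisance to be finessed away by Hilton--Milner stability: they are a proof that the plan is unworkable over most of the stated range. No choice of $m$ and no sharpening of the union bound can repair this, because the defect is the deterministic inequality $\alpha^*\geq\alpha$, not a probability estimate.

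The paper cites Theorem~\ref{thm:MW} from Mohar and Wu without reproducing its proof, but the proof of Theorem~\ref{teo:Kneser} in this paper follows the same template and illustrates what the right argument looks like. It avoids $\alpha^*$ entirely. One keeps your random orientation, but invokes the Lusternik--Schnirelmann--Borsuk theorem in Greene's form (Lemma~\ref{lem:Greene}): if a proper colouring uses too few colours, some colour class is forced to contain a complete bipartite subgraph $K_{\ell,\ell}$ with $\ell$ large. Lemma~\ref{lem:dense_bipartite} then shows a random orientation makes every such $K_{\ell,\ell}$ non-acyclic, contradicting properness of the colour class. This line of attack is immune to the Erd\H os--Ko--Rado obstruction because a large independent set contains no $K_{\ell,\ell}$ whatsoever; the lower bound comes from the edge density that topology forces inside each colour class, not from the mere cardinality of an acyclic set.
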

Note that, since $\vec\chi(G)\leq\chi(G)$ for any graph $G$, the estimate in Theorem~\ref{thm:MW} is sharp up to a constant factor when $k$ is a constant fraction of $n$. Theorem~\ref{teo:Kneser} improves this bound for slower growth rates of $k$. Note that it cannot be extended to $k=1$ due to the following result on tournaments. 
\begin{teo}\label{thm:chromatic_tournaments}\emph{\cite{EGK1991,EM1964,Harutyunyan2011}} Let $T$ be a tournament of order $n$. Then $\vec\chi(T)\leq\frac{n}{\log_2 n}(1+\text o(1))$.
\end{teo}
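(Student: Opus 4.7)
The plan is to prove the theorem by a greedy peeling argument, using the classical Erd\H{o}s--Moser bound on transitive subtournaments as the quantitative input. Every transitive tournament is acyclic, so assigning a single colour to the vertex set of a transitive subtournament is always permissible; if one can repeatedly extract a transitive subtournament on roughly $\log_2 n$ vertices, the number of colours used will be of the desired order.

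Concretely, I would proceed as follows. First, recall the Erd\H{o}s--Moser theorem: every tournament on $m$ vertices contains a transitive subtournament on at least $\lfloor\log_2 m\rfloor+1$ vertices, a statement proved by a simple pigeonhole/induction on majority out- or in-neighbourhoods. Second, starting from $T$ on $n$ vertices, peel off such a subtournament, colour it with a fresh colour, and recurse on the remainder. Writing $n_0=n$ and $n_{i+1}=n_i-t(n_i)$, where $t(m)\geq\log_2 m$ denotes the guaranteed size, one gets $\vec\chi(T)\leq k$, where $k$ is the first index with $n_k=0$.

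To extract the claimed asymptotic $\frac{n}{\log_2 n}(1+\mathrm{o}(1))$, I would split the recursion into two phases. In the bulk phase, while $n_i\geq n/(\log_2 n)^2$, we have $t(n_i)\geq \log_2 n-2\log_2\log_2 n$, so the number of steps in this phase is at most
\[
\frac{n}{\log_2 n-2\log_2\log_2 n}=\frac{n}{\log_2 n}(1+\mathrm{o}(1)).
\]
Once $n_i<n/(\log_2 n)^2$, the tail is negligible: even colouring the remaining vertices one at a time contributes only $n/(\log_2 n)^2=\mathrm{o}(n/\log_2 n)$ further colours.

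The main obstacle is squeezing out the sharp constant $1+\mathrm{o}(1)$ rather than settling for a larger multiplicative factor; a careless unfolding of the recurrence $c(m)\leq c(m-\log_2 m)+1$ yields only $\mathrm{O}(n/\log_2 n)$. The phase split above, together with the fact that $\log_2 n_i$ stays within a $1+\mathrm{o}(1)$ factor of $\log_2 n$ throughout the bulk, is precisely what captures the correct leading constant.
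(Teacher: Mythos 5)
Your proof is correct. The paper does not reprove this statement---it cites it as a known result from the literature---so there is no in-paper proof to compare against; but your greedy peeling argument is the standard proof of this bound. The key inputs are exactly right: the Erd\H{o}s--Moser theorem guaranteeing a transitive (hence acyclic) subtournament on at least $\lfloor\log_2 m\rfloor+1$ vertices in any tournament on $m$ vertices, and the two-phase analysis that keeps the number of colours at $\frac{n}{\log_2 n}(1+\mathrm{o}(1))$ rather than merely $\mathrm{O}(n/\log_2 n)$. Specifically, while the remaining tournament has at least $n/(\log_2 n)^2$ vertices, each peeled class has size at least $\log_2 n - 2\log_2\log_2 n$, so this phase uses $\frac{n}{\log_2 n}(1+\mathrm{o}(1))$ colours; the tail of fewer than $n/(\log_2 n)^2$ vertices can be coloured with $\mathrm{o}(n/\log_2 n)$ additional colours even one vertex at a time. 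This matches the argument one finds in the cited sources, and the place where a careless reader might stumble---getting the leading constant rather than a constant factor---is exactly the point you flag and handle correctly with the threshold at $n/(\log_2 n)^2$.
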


For the proof of Theorem~\ref{teo:Kneser}, we shall adapt Greene's proof of Kneser's conjecture~\cite{Greene2002}. The following version of Lusternik--Schnirelmann--Borsuk theorem plays a key role.

\begin{lemma}\label{lem:Greene} \emph{\cite{Greene2002}} If the sphere $\mathbb S^n$ is covered with $n+1$ sets, each of which is either open or closed, then one of the sets contains a pair of antipodal points.
\end{lemma}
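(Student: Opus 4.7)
The plan is to reduce the statement to the classical Lusternik--Schnirelmann--Borsuk theorem (LSB), which handles the case where all sets in the cover are closed (equivalently, all are open). So given a mixed cover $A_1,\ldots,A_{n+1}$ of $\mathbb S^n$ with each $A_i$ either open or closed, I would assume for contradiction that no $A_i$ contains an antipodal pair, and then construct a replacement cover of $\mathbb S^n$ by $n+1$ closed sets $B_i\subseteq A_i$. Antipode-freeness of the $B_i$ is automatic from $B_i\subseteq A_i$, so applying the classical (closed) LSB to the $B_i$ would immediately contradict the hypothesis.

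For the construction, if $A_i$ is already closed I simply set $B_i=A_i$. If $A_i$ is open, I take the inner parallel set
\[
B_i^{\varepsilon}=\{x\in\mathbb S^n:d(x,\mathbb S^n\setminus A_i)\geq\varepsilon\},
\]
which is closed and satisfies $B_i^{\varepsilon}\subseteq A_i$. The main task is then to exhibit a single $\varepsilon>0$ for which the family $\{B_i^{\varepsilon}:A_i\text{ open}\}\cup\{A_i:A_i\text{ closed}\}$ still covers $\mathbb S^n$, for then these are the desired $B_i$.

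The existence of such a uniform $\varepsilon$ I would obtain by a compactness/Lebesgue-number type argument. If no such $\varepsilon$ existed, then for each integer $m\geq 1$ I could pick a point $x_m\in\mathbb S^n$ missing every closed $A_i$ and every $B_i^{1/m}$ coming from an open $A_i$. By compactness of $\mathbb S^n$, pass to a convergent subsequence $x_m\to x^*$. Since each closed $A_i$ is closed and contains none of the $x_m$, we have $x^*\notin A_i$ for every closed $A_i$; thus, because the original sets cover $\mathbb S^n$, the point $x^*$ must lie in some open $A_j$, giving $\delta:=d(x^*,\mathbb S^n\setminus A_j)>0$. Choosing $m$ large enough so that $d(x_m,x^*)<\delta/2$ and $1/m<\delta/2$ yields $d(x_m,\mathbb S^n\setminus A_j)>\delta/2>1/m$, hence $x_m\in B_j^{1/m}$, contradicting the choice of $x_m$.

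The main obstacle I anticipate is precisely this uniform-$\varepsilon$ step: a priori each open $A_i$ requires its own shrinking parameter, and one needs compactness to coordinate them while remaining compatible with the closed $A_i$ that cannot be enlarged. Once the uniform $\varepsilon$ is in hand, the reduction to the classical closed version of the Borsuk covering theorem finishes the proof with no further work.
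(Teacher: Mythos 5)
The paper states this lemma with a citation to Greene and does not reproduce a proof, so there is no in-paper argument to compare yours against; what follows reviews your proposal on its own terms.

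Your reduction has a genuine gap, and it sits exactly where you flagged the main difficulty. In the compactness step you write ``Since each closed $A_i$ is closed and contains none of the $x_m$, we have $x^*\notin A_i$ for every closed $A_i$.'' This inference runs in the wrong direction. If $A_i$ is closed and $x_m\in A_i$ for all $m$, then indeed $x^*\in A_i$; but $x_m\notin A_i$ says only that $x_m$ lies in the open complement of $A_i$, and a limit of points of an open set may perfectly well land on its boundary, which belongs to the closed set $A_i$. So $x^*$ may lie in a closed $A_i$, and in that case your argument produces no contradiction. The failure is not cosmetic: the uniform $\varepsilon$ you seek need not exist for a mixed open/closed cover at all. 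For instance, cover $\mathbb S^1$ by $A_1$ equal to the closed upper semicircle and $A_2$ equal to the open lower semicircle; for every $\varepsilon>0$ the family $\{A_1, B_2^\varepsilon\}$ misses points of $A_2$ arbitrarily near the endpoints of $A_1$. (This example happens not to be antipode-free, but it shows that the Lebesgue-number claim cannot be obtained by a purely topological compactness argument that never uses antipode-freeness --- and your argument never uses it.)

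The clean fix is to go in the opposite direction: do not shrink the open sets, fatten the closed ones. If $A_i$ is closed and contains no antipodal pair, then $A_i$ and $-A_i$ are disjoint compact subsets of $\mathbb S^n$, hence $d(A_i,-A_i)=2\delta_i>0$, and the open neighbourhood $A_i^{+}=\{x\in\mathbb S^n: d(x,A_i)<\delta_i\}$ contains $A_i$ and is still antipode-free. Replacing every closed $A_i$ by $A_i^{+}$ yields an antipode-free open cover of $\mathbb S^n$ by $n+1$ open sets. For an all-open finite cover of a compact space a shrinking to closed subsets does exist (normality of $\mathbb S^n$ gives open $V_i$ with $\overline{V_i}\subseteq A_i^{+}$ still covering), so the open version of Lusternik--Schnirelmann--Borsuk follows from the classical closed one, and this contradicts antipode-freeness. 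Note that the fattening step uses the no-antipodal-pair hypothesis in an essential way; any correct proof of this mixed version must invoke that hypothesis somewhere before the final appeal to LSB, which is precisely what your shrinking construction fails to do.
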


From now on, by $\mathbb S^n$ we will denote the embedded unit sphere $S(0,1)=\{x\in\mathbb R^{n+1}\mid \|x\|=1\}\subseteq\mathbb R^{n+1}$ centered at the origin. The following probabilistic lemma will also be of help. %Note that $\binom{n}{k}\leq\left(\frac{en}{k}\right)^k$ by Stirling's formula.

\begin{lemma}\label{lem:dense_bipartite} Let $G$ be a graph of order $n\geq 2$ and $D$ the random orientation of $G$ obtained by orienting every edge independently with probability $\frac{1}{2}$.~Let $E_{\ell}$ be the event that there exists a subgraph of $G$ isomorphic to $K_{\ell,\ell}$ which is acyclic in $D$.~If $5\log_2 n\leq\ell$ then $\mathbb P(E_{\ell})<\frac{1}{2}$.
\end{lemma}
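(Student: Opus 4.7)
The plan is to use the first-moment method: apply a union bound over all subgraphs of $G$ isomorphic to $K_{\ell,\ell}$. If $n<2\ell$, there are no such subgraphs at all and $\mathbb P(E_\ell)=0$, so I may assume $n\geq 2\ell$. A $K_{\ell,\ell}$-subgraph of $G$ is determined by an unordered pair of disjoint $\ell$-subsets of $V(G)$, so there are at most $\tfrac12\binom{n}{\ell}\binom{n-\ell}{\ell}\leq\tfrac12\binom{n}{\ell}^{2}$ of them.

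Next I would bound the probability that a fixed copy $H\cong K_{\ell,\ell}$ becomes acyclic in $D$. Since the $\ell^{2}$ edges of $H$ are oriented independently and uniformly at random, this probability equals $a(K_{\ell,\ell})/2^{\ell^{2}}$, where $a(K_{\ell,\ell})$ denotes the number of acyclic orientations of $K_{\ell,\ell}$. Every acyclic orientation of a graph on $v$ vertices is obtained from some linear order of its vertices (via topological sorting), so $a(K_{\ell,\ell})\leq(2\ell)!$. Using $\binom{n}{\ell}\leq n^{\ell}/\ell!$, the identity $(2\ell)!=(\ell!)^{2}\binom{2\ell}{\ell}$, and $\binom{2\ell}{\ell}\leq 4^{\ell}$, the union bound would give
\[
\mathbb P(E_\ell)\ \leq\ \frac{1}{2}\binom{n}{\ell}^{2}\cdot\frac{(2\ell)!}{2^{\ell^{2}}}\ \leq\ \frac{1}{2}\cdot\frac{n^{2\ell}\cdot 4^{\ell}}{2^{\ell^{2}}}\ =\ \frac{(2n)^{2\ell}}{2^{\ell^{2}+1}}.
\]

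The final step is to verify that this quantity is strictly less than $1/2$ whenever $\ell\geq 5\log_{2}n$. Taking base-$2$ logarithms reduces this to $\ell>2(1+\log_{2}n)$, which follows from $\ell\geq 5\log_{2}n$ together with $3\log_{2}n>2$; the latter holds for every $n\geq 2$, settling the claim (with trivial slack when $n<2\ell$). The only point requiring genuine care is that the constant $5$ in the hypothesis is not extravagant, so I cannot afford much looseness in the bounds $a(K_{\ell,\ell})\leq(2\ell)!$ and $\binom{2\ell}{\ell}\leq 4^{\ell}$; a cruder estimate (say, $\binom{n}{\ell}\leq n^{\ell}$) would force a larger constant. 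Beyond this bookkeeping I do not expect any deeper obstacle: the probabilistic content is a textbook union bound.
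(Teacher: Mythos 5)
Your proof is correct and follows essentially the same route as the paper: a union bound over copies of $K_{\ell,\ell}$, combined with the observation that every acyclic orientation extends to a linear order on the $2\ell$ vertices, so there are at most $(2\ell)!$ of them. The only difference is minor bookkeeping — the paper bounds the number of copies by $\binom{n}{2\ell}(2\ell)!\leq n^{2\ell}$ and the acyclic count by $(2\ell)!\leq n^{2\ell}$, getting $\mathbb P(E_\ell)\leq n^{4\ell}2^{-\ell^2}$, whereas you keep the $\ell!$ factors and cancel them, arriving at the marginally tighter $\frac{(2n)^{2\ell}}{2^{\ell^2+1}}$; both close with room to spare under $\ell\geq 5\log_2 n$.
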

\begin{proof} %{\color{blue}Ararat: We don't really care about number of automorphism}
%Since $K_{\ell,\ell}$ has $2\ell!^2$ automorphisms, there are at most $\frac{1}{2}\binom{n}{2\ell}\binom{2\ell}{\ell}$ copies of $K_{\ell,\ell}$ in $G$. 
Each acyclic orientation of $K_{\ell,\ell}$ can be extended to a transitive tournament on the same vertex set, and different orientations always extend to different tournaments. Therefore, among the $2^{\ell^2}$ possible orientations of $K_{\ell,\ell}$, at most $(2\ell)!\leq(2\ell)^{2\ell}\leq n^{2\ell}$ are acyclic. Since $G$ has at most $\binom{n}{2\ell}(2\ell)!\leq n^{2\ell}$ copies of $K_{\ell,\ell}$, we have that 
$\mathbb P(E_{\ell})\leq n^{4\ell}2^{-\ell^2}\leq 2^{-\ell^2/5}\leq 2^{-5}$.
\end{proof}
%{\color{blue}[update the constant of the claim of the thm about $\vec\chi_{\ell}(K_{m*r})$?]}

\begin{teo}\label{teo:Kneser} There exist a positive integer $n_0$ such that, for all $n\geq n_0$ and $2\leq k\leq\frac{n}{2}$, we have $\vec\chi(KG(n,k))\geq\left\lfloor \frac{1}{16}\chi(KG(n,k))\right\rfloor$.
\end{teo}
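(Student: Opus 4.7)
The plan is to upgrade Greene's topological proof of Kneser's conjecture so that, instead of locating a single monochromatic edge of $KG(n,k)$, it produces a monochromatic $K_{\ell,\ell}$ subgraph for $\ell$ of order $\log\binom{n}{k}$. Combined with Lemma~\ref{lem:dense_bipartite}, this will force every such $K_{\ell,\ell}$ to contain a directed cycle in a carefully chosen orientation, yielding the lower bound.

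Set $m=\lfloor(n-2k+2)/16\rfloor$. The cases $m\leq 1$ are trivial, so assume $m\geq 2$. Let $\ell=\lceil 5\log_2\binom{n}{k}\rceil$ and let $D$ be a uniformly random orientation of $G=KG(n,k)$. Since $|V(G)|=\binom{n}{k}\geq 2$ and $\ell\geq 5\log_2|V(G)|$, Lemma~\ref{lem:dense_bipartite} yields, with positive probability, an orientation in which no $K_{\ell,\ell}$-subgraph of $G$ is acyclic. Fix such a $D$ and suppose, towards a contradiction, that $V(G)=C_1\cup\cdots\cup C_{m-1}$ is a partition into acyclic subdigraphs; each $C_i$ is then $K_{\ell,\ell}$-free when viewed as a subgraph of $G$.

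Now mimic Greene on $\mathbb{S}^{m-1}$. By Gale's lemma, place $n$ points $X=\{x_1,\ldots,x_n\}$ in general position on $\mathbb{S}^{m-1}$ so that every open hemisphere $H(p)=\{y\in\mathbb{S}^{m-1}:\langle p,y\rangle>0\}$ contains at least $h=\lceil(n-m+1)/2\rceil$ points of $X$. Identify $V(G)$ with $\binom{X}{k}$ and set, for each $i\in[m-1]$,
\[U_i=\{p\in\mathbb{S}^{m-1}:\text{at least }\ell\text{ vertices of }C_i\text{ have all their points in }H(p)\},\]
an open set, together with the closed remainder $V=\mathbb{S}^{m-1}\setminus\bigcup_{i=1}^{m-1}U_i$. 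These $m$ sets cover $\mathbb{S}^{m-1}$, and Lemma~\ref{lem:Greene} supplies an antipodal pair $\{p,-p\}$ in a common member of the cover. If $p,-p\in U_i$, the open hemispheres $H(p)$ and $H(-p)$ are disjoint, so the $\ell$ colour-$i$ vertices in $H(p)$ are pairwise disjoint (as $k$-subsets of $[n]$) from the $\ell$ colour-$i$ vertices in $H(-p)$; this gives a $K_{\ell,\ell}$-subgraph of $G$ entirely inside $C_i$, contradicting the choice of $D$. If instead $p,-p\in V$, then $H(p)$ contains at least $\binom{h}{k}$ vertices of $G$, and by pigeonhole some $C_i$ contributes at least $\binom{h}{k}/(m-1)$ of them, forcing $p\in U_i$ as soon as $\binom{h}{k}\geq(m-1)\ell$.

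The conceptual step is the strengthening of the threshold in the definition of $U_i$: requiring $\ell$ colour-$i$ vertices in $H(p)$ rather than a single one turns Greene's monochromatic edge into a monochromatic $K_{\ell,\ell}$. The main technical obstacle is the numerical check $\binom{h}{k}\geq m\ell$ for all $n\geq n_0$ and $2\leq k\leq n/2$ with $m\geq 2$. From $m\leq(n-2k+2)/16$ one obtains $h-k\geq 15(n-2k)/32+O(1)\geq 14$, and a short case split between bounded $k$ (where $h=\Theta(n)$ and $\binom{h}{k}$ is polynomial of degree $k$) and $k$ close to $n/2$ (where $\binom{h}{k}\geq\binom{k+14}{14}$ is polynomial of degree $14$ in $k$) shows that this binomial dominates $m\ell\leq\frac{n-2k+2}{16}(5\log_2\binom{n}{k}+1)$ for $n$ sufficiently large.
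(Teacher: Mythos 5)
Your proof is correct in outline and reaches the same conclusion, but it takes a genuinely different route from the paper in two respects, and the numerical verification is left as a sketch that needs slightly more care than your two-case split suggests.

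The paper follows Greene's topological proof: the $n$ points on $\mathbb{S}^{d}$ are merely required to be in general position, a parameter $s$ is chosen, $\ell$ is \emph{defined} as $\lceil\frac{1}{d}\binom{\lfloor s\rfloor}{k}\rceil$, and when the antipodal pair falls in the remainder set $F$ the contradiction comes from having $\geq d+1$ points on a hyperplane through the origin. You instead invoke Gale's lemma to force every open hemisphere to contain $h$ of the points, fix $\ell=\lceil 5\log_2\binom{n}{k}\rceil$ from the start, and derive the contradiction in the remainder case directly from $\binom{h}{k}\geq (m-1)\ell$. A genuine bonus of your route is that you never need Theorem~\ref{thm:MW}: the paper only runs its topological argument for $k\leq cn$ and falls back on Mohar--Wu for $cn\leq k\leq n/2$, because its estimate $\binom{\lfloor s\rfloor}{k}/d\geq\frac1n(\lfloor s\rfloor/k)^k$ degrades when $k$ is a large fraction of $n$; your use of $\binom{h}{k}=\binom{h}{h-k}$ with $h-k\geq 14$ sidesteps this and handles every $k$ with $m\geq 2$ uniformly. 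Two small points: Gale's lemma gives $h=\lfloor(n-m+1)/2\rfloor$ (round down, not up), and the ``general position'' hypothesis is never used in your argument and can be dropped.

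The one place that needs to be tightened is the verification of $\binom{h}{k}\geq (m-1)\ell$. Your case split ``bounded $k$'' versus ``$k$ close to $n/2$'' leaves the middle range (e.g.\ $k=\sqrt n$ or $k=n/4$) formally unaddressed; there $\binom{h}{k}$ is superpolynomial in $n$ while $(m-1)\ell=O(n^2\log n)$, so the inequality holds comfortably, but it should be said. A clean way to cover everything: write $j=h-k\geq 14$; if $j\leq h/2$ then $\binom{h}{j}\geq\binom{h}{14}=\Omega(n^{14})$; if $j>h/2$, so $k<h/2\leq h$, then either $k\geq 3\log_2 n$ and $\binom{h}{k}\geq 2^k\geq n^3$, or $2\leq k<3\log_2 n$ and $\binom{h}{k}\geq\binom{h}{2}=\Omega(n^2)$ while $(m-1)\ell=O(n\log^2 n)$. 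In every branch the binomial dominates $(m-1)\ell$ for $n$ large, matching the paper's conclusion with the same constant $1/16$.
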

\begin{proof} Let $0<c<\frac{1}{2}$ be a constant and set $t=\frac{-1}{8\log_2 c}$; we will show that $\vec\chi(KG(n,k))\geq\left\lfloor t\chi(KG(n,k))\right\rfloor$ if $c$ is smaller than a certain quantity. Picking $c=\frac{1}{4}$ will suffice, although there is some margin to choose larger values. If $cn\leq k\leq\frac{n}{2}$, then the result is implied by Theorem~\ref{thm:MW}.

Now suppose that $2\leq k\leq cn$. We assume for a contradiction that, for any given orientation of $KG(n,k)$, we can find a partition of its vertex set into $d=\lfloor t(n-2k+2)\rfloor-1$ acyclic subsets $\mathcal A_1,...,\mathcal A_d$. Let $X\subseteq\mathbb S^{d}\subseteq\mathbb R^{d+1}$ be a set of $n$ points on the unit sphere centered at the origin. We assume that these points together with the origin are in general position. In particular, there are no $d+1$ points of $X$ in a common hyperplane through the origin. The set of vertices of $KG(n,k)$ is assumed to be $\binom{X}{k}$. 

Let $s=tk+(1-t)\left(\frac{n}{2}+1\right)$ and $\ell=\left\lceil\frac{1}{d}\binom{\lfloor s\rfloor}{k}\right\rceil$ (note that $d\geq 1$; otherwise, the result is immediate). We define $U_i$ as the set of points $x\in\mathbb S^d$ for which there exist $\ell$ different vertices $A_1,...,A_{\ell}\in\mathcal A_i$ such that $x\cdot y>0$ for every $y\in A_1\cup...\cup A_{\ell}$. That is, $U_i$ is the set of poles of the open hemispheres containing all the points of $\ell$ vertices of $\mathcal A_i$.
It is clear that $U_i$ is an open set of $\mathbb S^d$. Additionally, we define $F=\mathbb S^d\setminus U_1\setminus...\setminus U_d$. By Lemma~\ref{lem:Greene}, one of the sets $U_1,...,U_d,F$ contains two antipodal points.

Suppose that $U_i$ contains two antipodal points $x,-x$. Then the hemispheres with pole $x,-x$ each contain the points of $\ell$ vertices of $\mathcal A_i$. Therefore $KG(n,k)[\mathcal A_i]$ has a subgraph isomorphic to $K_{\ell,\ell}$. By Lemma~\ref{lem:dense_bipartite}, $\ell\leq 5\log_2\binom{n}{k}\leq 5 k\log_2 n$. On the other hand, 
\[\ell\geq\frac{\binom{\lfloor s\rfloor}{k}}{d}\geq\frac{1}{n}\frac{\lfloor s\rfloor(\lfloor s\rfloor-1)...(\lfloor s\rfloor-k+1)}{k!}\geq\frac{1}{n}\left(\frac{\lfloor s\rfloor}{k}\right)^k\geq\frac{1}{n}\left(\frac{s-1}{k}\right)^k\geq\frac{1}{n}\left(\frac{(1-t)n}{2k}\right)^k.\]
We distinguish two cases.

\textbf{Case 1.} $2\leq k\leq n^{\frac{1}{5}}$.

In this case 
\[\ell\geq\frac{1}{n}\left(\frac{(1-t)n}{2k}\right)^k\geq\frac{1}{n}\left(\frac{(1-t)n^{\frac{4}{5}}}{2}\right)^k\geq n^{\frac{1}{5}}\left(\frac{(1-t)n^{\frac{1}{5}}}{2}\right)^k,\]
contradicting, when $n$ is large, that $\ell\leq 5k\log_2 n\leq 5n^{\frac{1}{5}}\log_2 n$.

\textbf{Case 2.} $n^{\frac{1}{5}}\leq k\leq cn$.

In this case
\[\ell\geq\frac{1}{n}\left(\frac{(1-t)n}{2k}\right)^k\geq\frac{1}{n}\left(\frac{1-t}{2c}\right)^{n^{\frac{1}{5}}}.\]
Provided that $1-t-2c>0$, this contradicts that $\ell\leq 5k\log_2 n\leq 5c n\log_2 n$ when $n$ is large. Note that by picking $c=\frac{1}{4}$ we have $1-t-2c=1-\frac{1}{16}-\frac{1}{2}>0$.

In conclusion, $F$ must contain two antipodal points $x,-x$. But then the hemispheres with pole $x,-x$ each contain at most $\lfloor s\rfloor-1$ points of $X$. Indeed, if one of them contained $\lfloor s\rfloor$ points, it would contain the points of $\binom{\lfloor s\rfloor}{k}>\left(\left\lceil\frac{1}{d}\binom{\lfloor s\rfloor}{k}\right\rceil-1\right)d=(\ell-1)d$ vertices, so at least $\ell$ vertices of the same colour would be involved%, meaning that its pole is not in $F$
. Hence, there are at least $n-2(\lfloor s\rfloor-1)\geq n-2s+2=t(n-2k+2)\geq d+1$ points of $X$ on the hyperplane separating the two hemispheres, contradicting the general position of $X\cup\{0\}$.
\end{proof}

Kneser's conjecture remained open for more than two decades \cite{Kneser1955,Ziegler2021}. The famous resolution by Lov\'asz~\cite{Lovasz1978} was inspired by the analogy between Kneser graphs and Borsuk graphs. Let $n$ be a natural number and $a\in(0,2)$ a real number. The \emph{Borsuk graph} with parameters $n+1$ and $a$, denoted by $BG(n+1,a)$, is the undirected graph with vertex set $\mathbb S^n=\{x\in\mathbb R^{n+1}\mid \|x\|=1\}$ where two vertices $x,y$ are adjacent if and only if $\|y-x\|\geq a$. The study of the chromatic number of Borsuk graphs can be linked with geometric packing/covering problems. If $a$ is large enough, an $(n+2)$-colouring of $BG(n+1,a)$ can be obtained by projecting the faces of an inscribed $(n+1)$-dimensional simplex. It turns out that this cannot be improved, no matter how close $a$ is to $2$. Indeed, it is known that $\chi(BG(n+1,a))\geq n+2$ for every $a\in(0,2)$, which is in fact equivalent to the Borsuk--Ulam theorem~\cite{Matousek2003}. The rest of the present section is devoted to prove that the dichromatic number of Borsuk graphs admits the same general lower bound. %{\color{gray}It is known that $\chi(BG(n+1,a))\geq n+2$ for every $a\in(0,2)$, which in fact is equivalent to the Borsuk--Ulam theorem \cite{Matousek2003}. On the other hand, if $a$ is large enough, an $(n+2)$-colouring of $BG(n+1,a)$ can be obtained by projecting the faces of an inscribed $(n+1)$-dimensional simplex. %{\color{orange}[The resemblance between Borsuk graphs and Kneser graphs was instrumental in Lov\'asz's proof of Kneser's conjecture \cite{Lovasz1978}. These similarities can be exploited to compute the dichromatic number of Borsuk graphs. In general, the exact value of the dichromatic number is difficult to find, and in the case of parameterised families of graphs we are usually happy with tight asymptotic bounds. In this case benevolent structural properties make it possible.]} {\color{blue}[too much nonsense?]}
%Regarding the dichromatic number of Borsuk graphs, the following holds.}

\begin{teo}\label{teo:Borsuk} $\vec\chi(BG(n+1,a))\geq n+2$ for any $n\geq 1$ and any $a\in(0,2)$.
\end{teo}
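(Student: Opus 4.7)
The plan is to mimic the topological proof that $\chi(BG(n+1,a))\geq n+2$, adapting it to the directed setting with Lemma~\ref{lem:Greene} as the main tool. I would argue by contradiction, supposing that there is an orientation $D$ of $BG(n+1,a)$ together with a partition $\mathbb{S}^n=\mathcal{A}_1\sqcup\cdots\sqcup\mathcal{A}_{n+1}$ into $n+1$ subsets each inducing an acyclic subdigraph of $D$. Since an acyclic digraph (even an infinite one) admits a linear extension by Szpilrajn's theorem, for each $i$ I would fix a topological order $\pi_i\colon\mathcal{A}_i\to\mathbb{R}$, so that $u\to v$ in $D[\mathcal{A}_i]$ forces $\pi_i(u)<\pi_i(v)$. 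Note that we may choose the orientation $D$ to our advantage, since $\vec\chi(BG(n+1,a))$ is the \emph{maximum} dichromatic number over all orientations.

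The central technical step is to use the orders $\pi_i$ to split each $\mathcal{A}_i$ into a ``lower half'' $\mathcal{A}_i^-$ and an ``upper half'' $\mathcal{A}_i^+$, for instance by a threshold chosen so that $x\in\mathcal{A}_i^-$ and $-x\in\mathcal{A}_i^+$ whenever both $x,-x\in\mathcal{A}_i$ (which is possible since in that case $\pi_i(x)\neq\pi_i(-x)$). One then recombines these halves across colour classes using a fixed-point-free permutation $\sigma$ of $[n+1]$ (a derangement, which exists because $n+1\geq 2$), setting $\mathcal{B}_i=\mathcal{A}_i^-\cup\mathcal{A}_{\sigma(i)}^+$. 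The design ensures that no $\mathcal{B}_i$ contains a pair $\{x,-x\}$ lying entirely in a single colour class, while the $\mathcal{B}_i$ still cover $\mathbb{S}^n$. Passing to closures $\overline{\mathcal{B}_i}$ gives $n+1$ closed sets covering $\mathbb{S}^n$, to which Lemma~\ref{lem:Greene} applies and yields an antipodal pair $\{x,-x\}\subseteq\overline{\mathcal{B}_i}$ for some $i$; since $\mathcal{B}_i$ itself is antipode-free, this pair must come from sequences in $\mathcal{B}_i$ converging to $x$ and to $-x$ respectively.

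The hard part will be converting this ``limit antipodal pair'' into an honest directed cycle inside one of the $\mathcal{A}_j$'s. The approximating sequences give only a complete bipartite-type subgraph of $BG(n+1,a)$ near $\{x,-x\}$, and bipartite graphs can of course be oriented acyclically, so a genuinely new ingredient is required. My plan is to exploit the freedom in choosing $D$: orient every antipodal edge $\{z,-z\}$ coherently (say toward the endpoint with negative first coordinate), and orient non-antipodal edges by a geometric rule (for instance, by the sign of the first coordinate of the midpoint, with a fixed tie-break). Under such a geometrically structured orientation, the dense near-antipodal cluster in $\mathcal{A}_j$ should contain some third point $y$ close to a suitable location, forcing a directed triangle $x\to y\to -x\to x$ entirely inside $\mathcal{A}_j$ and contradicting acyclicity. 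Reconciling the combinatorial constraints imposed by $\pi_j$ with the geometric orientation rule on this near-antipodal cluster is the delicate step and the main obstacle I expect to encounter.
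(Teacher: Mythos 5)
Your proposal correctly identifies the shape of the problem (adapt the Lusternik--Schnirelmann--Borsuk / Greene argument; the hard part is that the near-antipodal clusters one finds form complete bipartite graphs, which by themselves can be oriented acyclically), but it does not actually close that gap, and the auxiliary constructions you propose do not work as stated.

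First, the derangement trick is broken. Splitting each $\mathcal{A}_i$ so that no antipodal pair lies on one side and then setting $\mathcal{B}_i=\mathcal{A}_i^-\cup\mathcal{A}_{\sigma(i)}^+$ only guarantees that $\mathcal{B}_i$ contains no antipodal pair that lies \emph{inside a single colour class}; nothing prevents $x\in\mathcal{A}_i^-$ and $-x\in\mathcal{A}_{\sigma(i)}^+$, so $\mathcal{B}_i$ can easily contain antipodal pairs. The later sentence ``since $\mathcal{B}_i$ itself is antipode-free'' therefore contradicts what you actually established, and the application of Lemma~\ref{lem:Greene} to $\overline{\mathcal{B}_i}$ yields nothing. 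There is a second, independent difficulty: the sets $\mathcal{A}_i$, and hence $\mathcal{B}_i$, are arbitrary subsets of $\mathbb{S}^n$ with no topological structure, so replacing them by closures does make them closed but simultaneously destroys the combinatorial information (an antipodal pair in $\overline{\mathcal{B}_i}$ need not even be close to points of $\mathcal{B}_i$ of the ``right'' kind). Finally, you acknowledge the central obstacle yourself --- turning a near-antipodal bipartite cluster into a directed cycle --- and your proposed fix, a geometric orientation by the sign of the first coordinate of the midpoint, is not an argument: near-antipodal pairs have midpoint near the origin, so this rule degenerates exactly where you need it, and no directed cycle is forced.

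The paper's proof fills precisely this hole with three ingredients you are missing. It first \emph{discretizes}: it picks a finite net $Y\subseteq\mathbb{S}^n$ (one point per small grid cube) so that every $\delta$-ball contains at least $m^{n+1}$ points of $Y$, and works with the finite induced subgraph $H=BG(n+1,a)[Y]$ --- necessary both because the definitions in the paper are for finite digraphs and because the key lemma is a finite counting bound. Second, it defines the covering sets \emph{as density thresholds}, $U_i=\{x:|B(x,\delta)\cap Y_i|\geq\ell\}$; these are genuinely open by construction, so Lemma~\ref{lem:Greene} applies directly without passing to closures, and an antipodal pair in $U_i$ yields $\ell$ points of colour $i$ in each of two opposite caps --- i.e.\ an honest $K_{\ell,\ell}$, not just a limit. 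Third, it chooses the orientation of $H$ \emph{probabilistically} so that (by Lemma~\ref{lem:dense_bipartite}) every copy of $K_{\ell,\ell}$ with $\ell\geq 5\log_2|Y|$ contains a directed cycle; taking $\ell$ large but still at least $5\log_2|Y|$ (possible because $|Y|$ is only polynomial in $\ell$) gives the contradiction. That probabilistic orientation is the ``genuinely new ingredient'' you were looking for, and without it the argument does not go through.
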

\begin{proof} Let us denote by $B(x,r)$ the open ball $\{y\in\mathbb R^{n+1}\mid \|y-x\|<r\}$. Let $\delta\in (0,2)$ such that every point in $B(x,\delta)\cap\mathbb S^n$ is adjacent to every point in $B(-x,\delta)\cap\mathbb S^n$ for any $x\in\mathbb S^n$. Let $\ell$ be an integer that for now remains unspecified, but that is assumed to be as large as desired. We define $m=\left\lceil\sqrt[n+1]{(\ell-1)(n+1)}\right\rceil+1\leq 2\sqrt[n+1]{(\ell-1)(n+1)}$ and $c=\frac{\delta}{m\sqrt{n+1}}$. 

An \emph{open hypercube} of $\mathbb R^{n+1}$ is the image by a rigid transformation of a product of intervals $(0,\lambda)^{n+1}\subseteq\mathbb R^{n+1}$, where $\lambda\in\mathbb R^+$. The length of its \emph{side} is $\lambda$ and the length of its \emph{longest diagonal} is its diameter (i.e.~$\lambda\sqrt{n+1}$). Let $\mathcal Q_c$ be the set of open hypercubes of side $c$ of the form $(ck_1,ck_1+c)\times...\times(ck_{n+1},ck_{n+1}+c)$ with $(k_1,...,k_{n+1})\in\mathbb Z^{n+1}$, i.e.~the ones obtained by rescaling the integer lattice by a factor of $c$. We will make use of the following easy observations about $\mathcal Q_c$.

%of side $c$ defined by the lattice $c\mathbb Z^{n+1}$. That is, the elements of $\mathcal Q$ are exactly the subsets of $\mathbb R^{n+1}$ of the form $\{(x_1,...,x_{n+1})\in\mathbb R^{n+1}\mid \forall i\in[n+1]\ ck_i<x_i<c(k_i+1)\}$ for some $(k_1,...,k_{n+1})\in\mathbb Z^{n+1}$. We will make use of the following easy observations about $\mathcal Q$.
%{\color{blue}Ararat: Definition of $Q$ is very unclear; what is an open hypercube?}

\vspace{2mm}
\noindent\textbf{Observation 1.} For every $x\in\mathbb R^{n+1}$, $B(x,\delta)$ contains at least $m^{n+1}$ hypercubes of $\mathcal Q_c$.

\vspace{2mm}
\noindent\textit{Proof.} Consider an open hypercube $Q$ of longest diagonal $\delta$ with $x\in Q$. Clearly $Q\subseteq B(x,\delta)$ and the side of $Q$ is $\frac{\delta}{\sqrt{n+1}}$. This implies the claim. \qedblack
\vspace{2mm}

\noindent\textbf{Observation 2.} $B(0,1+2\delta)$ is contained in any open hypercube $Q$ of side $2c\left\lceil\frac{1+2\delta}{c}\right\rceil$ centered at the origin. Moreover, one (in fact exactly one) such $Q$ can be obtained as the interior of the closure of the union of $\left(2\left\lceil\frac{1+2\delta}{c}\right\rceil\right)^{n+1}$ hypercubes of $\mathcal Q_c$. \qedblack
\vspace{2mm}

Let $\mathcal Q'_c\subseteq\mathcal Q_c$ be the set of $\left(2\left\lceil\frac{1+2\delta}{c}\right\rceil\right)^{n+1}$ hypercubes from Observation 2. For each $Q\in\mathcal Q'_c$ choose a point $x_Q\in Q$. Let $y_Q$ be the point where the open ray starting at the origin and passing through $x_Q$ intersects $\mathbb S^n$. Since $n\geq 1$ we can assume that the points $x_Q$ have been chosen so that $y_Q\neq y_{Q'}$ if $Q\neq Q'$. Let $Y=\{y_Q\mid Q\in\mathcal Q'_c\}$. Note that 
\[|Y|=|\mathcal Q'_c|=\left(2\left\lceil\frac{(1+2\delta)m\sqrt{n+1}}{\delta}\right\rceil\right)^{n+1}\leq\left(\frac{8(1+2\delta)\sqrt{n+1}}{\delta}\right)^{n+1}(\ell-1)(n+1).\]

\vspace{2mm}
\noindent\textbf{Observation 3.} For every $x\in\mathbb S^{n}$, $B(x,\delta)$ contains at least $m^{n+1}$ points of $Y$.

\vspace{2mm}
\noindent\textit{Proof.} Since $B((1+\delta)x,\delta)\subseteq B(0,1+2\delta)$, all hypercubes of $\mathcal Q_c$ intersecting $B((1+\delta)x,\delta)$ are in $\mathcal Q'_c$. Hence, by Observation~1, $B((1+\delta)x,\delta)$ contains $m^{n+1}$ hypercubes of $\mathcal Q'_c$. The points in $Y$ corresponding to these hypercubes all lie in $B(x,\delta)$. %{\color{blue}[include the geometrical justification of the last sentence?]}
\qedblack
\vspace{2mm}

We now consider the finite induced subgraph $H=BG(n+1,a)[Y]$ of $BG(n+1,a)$. It will be enough to show that $\vec\chi(H)\geq n+2$. Let us assume for a contradiction that each orientation of $H$ admits a partition of $Y$ into $n+1$ acyclic subsets $Y_1,...,Y_{n+1}$. For $i\in[n+1]$ let $U_i=\{x\in\mathbb S^n\mid |B(x,\delta)\cap Y_i|\geq\ell\}$. Clearly, $U_i$ is an open set of $\mathbb S^n$. Moreover, $\mathbb S^n=U_1\cup...\cup U_{n+1}$. Indeed, otherwise $B(x,\delta)$ would contain at most $(\ell-1)(n+1)<m^{n+1}$ points of $Y$ for some $x\in\mathbb S^n$, contradicting Observation~3. Therefore, by Lemma~\ref{lem:Greene}, $U_i$ contains two antipodal points $x$ and $-x$ for some $i\in[n+1]$.

By the choice of $\delta$, we know that in $H$ there is a copy of $K_{\ell,\ell}$ of colour $i$. Now, $5\log_2 |Y|\leq\ell$ if $\ell$ is large enough. By Lemma~\ref{lem:dense_bipartite}, there is an orientation of $H$ such that every copy of $K_{\ell,\ell}$ in $H$ has a directed cycle, a contradiction.
\end{proof}

\section{The list dichromatic number of Kneser graphs and complete multipartite graphs}\label{sec:lists}

The goal of this section is to study the list dichromatic number of complete
multipartite graphs and Kneser graphs. In the first subsection, we study the 
list dichromatic number of complete multipartite graphs, obtaining
tight upper and lower bounds (up to a multiplactive factor). Following this,
in the second subsection, we consider Kneser graphs.

\subsection{List dichromatic number of complete multipartite graphs}

We denote by $K_{m*r}$ the complete $r$-partite graph with $m$ vertices on each part. Answering a question of Erd{\H o}s, Rubin and Taylor \cite{ERT1979}, Alon determined, up to a constant factor, the list chromatic number of $K_{m*r}$.

\begin{teo}\label{thm:complete_multipartite}\emph{\cite{Alon1992}} There exist two positive constants $c_1$ and $c_2$ such that for every $m\geq 2$ and for every $r\geq 2$
\[c_1 r\ln m\leq\chi_{\ell}(K_{m*r})\leq c_2 r\ln m.\]
\end{teo}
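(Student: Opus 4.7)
The plan is to establish the two inequalities by separate probabilistic arguments, following Alon's original approach.

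For the \textbf{upper bound}, the essential observation is that in $K_{m*r}$ every colour class of a proper colouring lies entirely in one part. Hence, given a $k$-list assignment $L$, a proper list colouring exists if and only if there is a function $\phi:\bigcup_v L(v)\to [r]$ such that for every vertex $v$ in part $P_i$ some colour $c\in L(v)$ satisfies $\phi(c)=i$. I would produce $\phi$ by sending each colour, independently and uniformly, to one of the $r$ parts. For a fixed vertex $v$ in part $P_i$, the probability that no colour of $L(v)$ is mapped to $i$ is $(1-1/r)^k\leq e^{-k/r}$. A union bound over the $mr$ vertices bounds the failure probability by $mr\cdot e^{-k/r}$, which is strictly less than $1$ once $k\geq c_2 r\ln m$ for a suitable constant $c_2$.

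For the \textbf{lower bound}, I would exhibit a bad $k$-list assignment by a random construction. Fix a colour universe $[n]$ (with $n$ to be tuned, of order $rm/\ln m$ or similar), set $k=c_1 r\ln m$ for a small constant $c_1$, and to each vertex $v$ assign an independent uniformly random $k$-subset $L(v)\subseteq [n]$. As above, the existence of a proper list colouring is equivalent to the existence of a map $\phi:[n]\to [r]$ such that, writing $A_i=\phi^{-1}(i)$,
\[
L(v)\cap A_i\neq\emptyset \quad \text{for every } i\in[r] \text{ and every } v\in P_i.
\]
For such a fixed $\phi$ the events over vertices are independent and
\[
\Pr[\phi \text{ works}]=\prod_{i=1}^{r}\left(1-\binom{n-|A_i|}{k}\Big/\binom{n}{k}\right)^{m}\leq \prod_{i=1}^{r}\bigl(1-(1-|A_i|/n)^{k}\bigr)^{m}.
\]
A convexity/AM-GM estimate shows the right-hand side is largest when $\phi$ is balanced, i.e.\ $|A_i|=n/r$, giving a bound of the shape $(1-(1-1/r)^k)^{mr}\leq \exp(-mr(1-1/r)^k)$. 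A union bound over the $r^n$ choices of $\phi$ then yields
\[
\Pr[\text{some $\phi$ works}]\leq r^{n}\exp\!\bigl(-mr\,e^{-k/r}\bigr),
\]
and one chooses $n$ and $c_1$ so that the exponent is negative, forcing the existence of an $L$ with no proper list colouring. This will show $\chi_\ell(K_{m*r})>k=c_1r\ln m$.

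The \textbf{main obstacle} is the lower bound, specifically the choice of $n$. Increasing $n$ makes each set $A_i$ proportionally thinner and so helps defeat any fixed $\phi$, but it simultaneously inflates the $r^n$ union-bound factor. The delicate task is to pick $n$ (roughly of order $rm/\ln m$) so that the per-$\phi$ survival probability beats $r^{-n}$ while the resulting threshold for $k$ remains of order $r\ln m$ with a single absolute constant $c_1$ working uniformly for all $m,r\geq 2$ (or at least over the relevant regime, as will be needed in the paper's dichromatic extension).
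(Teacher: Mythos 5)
The paper does not actually prove this statement; Theorem~\ref{thm:complete_multipartite} is cited directly from Alon, and the only related argument in the paper is the lower-bound half of the \emph{directed} analogue, Theorem~\ref{teo:directed_multipartite}, which the authors carry out only in the restricted regime $m\geq\ln^\rho r$. With that in mind, your proposal contains two genuine gaps. For the \textbf{upper bound}, the union bound over the $mr$ vertices gives a failure probability at most $mr\,e^{-k/r}$, and this is $<1$ only when $k>r\ln(mr)$. But $r\ln(mr)=r\ln m+r\ln r$ is \emph{not} $O(r\ln m)$ uniformly: for $m$ fixed (say $m=2$) and $r\to\infty$, the ratio $\ln(mr)/\ln m$ is unbounded, so the claim ``strictly less than $1$ once $k\geq c_2 r\ln m$'' is simply false as stated. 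Some further device (a case split on the relative size of $m$ and $r$, or a sharper analysis than the crude union bound) is needed to reach $c_2 r\ln m$ with an absolute constant.

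For the \textbf{lower bound}, the displayed inequality is reversed: since $\binom{n-a}{k}/\binom{n}{k}=\prod_{j=0}^{k-1}\frac{n-a-j}{n-j}\leq(1-a/n)^k$, one has $1-\binom{n-a}{k}/\binom{n}{k}\geq 1-(1-a/n)^k$, so your chain of upper bounds on $\Pr[\phi\text{ works}]$ breaks at the first step (it is fixable by using a lower bound such as $\binom{n-a}{k}/\binom{n}{k}\geq(1-a/(n-k))^k$, but it must be fixed). More importantly, union bounding over all $r^n$ functions $\phi$ is the wrong quantifier to control: with $n$ of order $rm/\ln m$, the $r^n$ term dwarfs the per-$\phi$ gain and the bound does not close, which is exactly the obstacle you identify at the end. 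The way Alon (and the paper, in its proof of Theorem~\ref{teo:directed_multipartite}) closes this is structural: in any proper colouring of $K_{m*r}$ each colour class lies in a single part, so the colour sets $f(V_1),\dots,f(V_r)$ are pairwise disjoint and hence some part uses at most $u/r$ colours, where $u$ is the palette size. One therefore needs only a union bound over the $r$ parts and the $\binom{u}{u/r}$ candidate small colour sets, with a palette of size $u=\Theta(r\ln m)$ (not $\Theta(rm/\ln m)$). Without this reduction the argument does not go through.
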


More precise results were obtained in \cite{GK2006}. Adapting Alon's proof, we find an analogous bound for the list dichromatic number of $K_{m*r}$ when $r\geq 2$ and $m\geq\max\{\ln^{\rho}r,2\}$, for any $\rho>3$ (Theorem~\ref{teo:directed_multipartite}). We remark that it is known that $\vec{\chi}(K_{m*r}) = r$, when $m$
is sufficiently large (see~\cite{HHH23}; see also~\cite{Erdos1979}).
%{\color{blue}[Some comment on the limitations when $m$ is very small? Result on tournaments? (although is not exactly that)]} 
We will use the following probabilistic result, which is a consequence of the Hoeffding--Azuma
inequality.
\begin{teo}\label{teo:simple_concentration} \emph{(Simple Concentration Bound, \cite{MolloyReed2002})} Let $X$ be a random variable determined by $n$ independent trials, and satisfying the property that changing the outcome of any single trial can affect $X$ by at most $c$. Then
\[\mathbb P(|X-\mathbb EX|>t)\leq 2e^{-\frac{t^2}{2c^2 n}}.\]
\end{teo}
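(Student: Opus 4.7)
The standard route is to realize $X$ as the endpoint of a bounded-difference Doob martingale and then invoke the Hoeffding--Azuma inequality. Concretely, denote the $n$ independent trials by $T_1,\dots,T_n$, so that $X=f(T_1,\dots,T_n)$ for some measurable function $f$. Define
\[Z_i \;=\; \mathbb E[X\mid T_1,\dots,T_i]\qquad\text{for } i=0,1,\dots,n,\]
with the convention that $Z_0=\mathbb E X$ and $Z_n=X$. The tower property makes $(Z_i)_{i=0}^{n}$ a martingale with respect to the filtration $\mathcal F_i=\sigma(T_1,\dots,T_i)$.

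The heart of the matter is verifying the bounded-differences condition $|Z_{i+1}-Z_i|\le c$ almost surely. Let $T'_{i+1}$ be an independent copy of $T_{i+1}$, and set $X' = f(T_1,\dots,T_i,T'_{i+1},T_{i+2},\dots,T_n)$. Because $X$ and $X'$ agree on all trials except possibly the $(i+1)$-st, the Lipschitz hypothesis gives $|X-X'|\le c$ pointwise. Since $T'_{i+1}$ is independent of $(T_1,\dots,T_{i+1})$ and has the same distribution as $T_{i+1}$, we also have $\mathbb E[X'\mid T_1,\dots,T_{i+1}]=Z_i$. Therefore $Z_{i+1}-Z_i = \mathbb E[X-X'\mid T_1,\dots,T_{i+1}]$, and the triangle inequality for conditional expectations yields $|Z_{i+1}-Z_i|\le c$ almost surely.

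With this in hand, I would invoke the Hoeffding--Azuma inequality for martingales with bounded differences, obtaining
\[\mathbb P(|Z_n-Z_0|>t) \;\le\; 2\exp\!\left(-\frac{t^2}{2nc^2}\right),\]
which is precisely the stated bound once one substitutes $Z_0=\mathbb E X$ and $Z_n=X$. The main obstacle is the bounded-differences verification: it is the step that converts a hypothesis phrased on sample paths into a martingale-increment bound, via the coupling $(X,X')$ and the identity $\mathbb E[X'\mid \mathcal F_{i+1}]=Z_i$. Everything else is essentially a quotation of Azuma's inequality, so the remaining work is bookkeeping.
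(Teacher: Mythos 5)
Your proof is correct and uses exactly the route the paper alludes to: the paper states this result is a cited consequence of the Hoeffding--Azuma inequality (with a reference to Molloy and Reed), and your Doob-martingale-plus-Azuma argument, including the coupling $(X,X')$ to verify $|Z_{i+1}-Z_i|\le c$, is the standard derivation. The paper gives no further details, so your write-up simply fills in what the citation leaves implicit.
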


We will also invoke the following elementary fact.
\begin{remark}\label{lem:calculus} Let $a\in\mathbb R^+$. The function $f:(a,\infty)\rightarrow\mathbb R$ defined by $f(x)=\left(1-\frac{a}{x}\right)^x$ is increasing.
\end{remark}
\begin{proof} $f'(x)=f(x)\left(\ln\left(1-\frac{a}{x}\right)+\frac{a}{x-a}\right)\geq f(x)\left(\ln\frac{x-a}{x}+\ln\frac{x}{x-a}\right)=0$.
\end{proof}

\begin{teo}\label{teo:directed_multipartite} For every $\rho>3$ there exist constants $c_1,c_2\in\mathbb R^+$ such that if $r\geq 2$ and $m\geq\max\{\ln^{\rho} r,2\}$ then \[c_1 r\ln m\leq\vec\chi_{\ell}(K_{m*r})\leq c_2 r\ln m.\]
\end{teo}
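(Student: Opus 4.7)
The upper bound is immediate: since independent sets are acyclic under any orientation, $\vec\chi_{\ell}(K_{m*r})\leq\chi_{\ell}(K_{m*r})\leq c_2 r\ln m$ by Theorem~\ref{thm:complete_multipartite}.

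For the lower bound, the plan is to adapt Alon's probabilistic proof by combining it with Lemma~\ref{lem:dense_bipartite} and the concentration inequality of Theorem~\ref{teo:simple_concentration}. Let $c_1>0$ be a small constant to be fixed in terms of $\rho$, and set $k=\lfloor c_1 r\ln m\rfloor$, $n=2k$, and $\ell=\lceil 5\log_{2}(mr)\rceil$. Pick, independently, a uniformly random orientation $D$ of $K_{m*r}$ and, for each vertex $v$, a uniformly random $k$-subset $L(v)$ of $[n]$. I will argue that, with positive probability, $D$ admits no proper colouring accepted by $L$, which gives $\vec\chi_{\ell}(K_{m*r})>k$.

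First, by Lemma~\ref{lem:dense_bipartite} applied to $K_{m*r}$, with probability at least $1/2$ no copy of $K_{\ell,\ell}$ inside $K_{m*r}$ is acyclic in $D$; call this event $\mathcal{E}_D$. Conditional on $\mathcal{E}_D$, any acyclic vertex set of $D$ meets at most one part in $\geq\ell$ vertices, for otherwise two such parts would induce an acyclic $K_{\ell,\ell}$. Consequently, any proper $D$-colouring $f$ determines a \emph{big-part} function $\phi\colon[n]\to[r]\cup\{*\}$, sending each colour $j$ to the unique part containing at least $\ell$ vertices of $f^{-1}(j)$ (and to $*$ if no such part exists). Since for every $i\in[r]$ the colours with $\phi(j)\neq i$ contribute fewer than $\ell$ vertices each to $P_i$, one gets
\[
  |\{v\in P_i:L(v)\cap\phi^{-1}(i)\neq\emptyset\}|\geq m-n(\ell-1).
\]
Setting $Y_i^{\phi}=|\{v\in P_i:L(v)\cap\phi^{-1}(i)\neq\emptyset\}|$, which is a sum of $m$ independent Bernoullis whose parameter is controlled via Remark~\ref{lem:calculus}, Theorem~\ref{teo:simple_concentration} gives an exponential tail bound on $Y_i^{\phi}$, and a union bound over the $(r+1)^n$ choices of $\phi$ yields
\[
  \mathbb{P}\bigl[\text{a proper $L$-acceptable colouring of $D$ exists}\mid\mathcal{E}_D\bigr]\leq\sum_{\phi}\prod_{i=1}^{r}\mathbb{P}\bigl[Y_i^\phi\geq m-n(\ell-1)\bigr].
\]
For $c_1$ chosen small enough (depending on $\rho$) and $m\geq\ln^{\rho}r$, I expect this sum to be less than $1/2$; combined with $\mathbb{P}[\mathcal{E}_D]\geq 1/2$, this produces the desired pair $(D,L)$.

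The main obstacle is the final quantitative step. The ``slack'' $n(\ell-1)=\Theta(r(\ln m)\log(mr))$ introduced by the \emph{acyclic} (rather than \emph{independent}) structure of colour classes must remain negligible compared to the gap $m-\mathbb{E}Y_i^\phi\sim m^{1-\Theta(c_1)}$ provided by the binomial concentration, so that Theorem~\ref{teo:simple_concentration} beats the $(r+1)^n$ factor of the union bound. Balancing the exponent $\rho(1-\Theta(c_1))$ against the logarithmic factor $n\ln r$ forces exactly the condition $\rho>3$, and is where Remark~\ref{lem:calculus} plays its role in keeping the Bernoulli parameter under control for unbalanced choices of $\phi$.
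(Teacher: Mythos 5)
The upper bound is fine. For the lower bound, however, there is a genuine gap that causes the argument to fail precisely in the regime $r\gg m$ that the theorem must cover.

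Your key inequality $|\{v\in P_i:L(v)\cap\phi^{-1}(i)\neq\emptyset\}|\geq m-n(\ell-1)$ charges every colour $j$ with $\phi(j)\neq i$ up to $\ell-1$ vertices, so the slack is $n(\ell-1)$, with $n=2k=\Theta(c_1 r\ln m)$ and $\ell=\Theta(\log(mr))$. But the hypothesis $m\geq\ln^{\rho}r$ allows $r$ up to $e^{m^{1/\rho}}$, so $n(\ell-1)=\Theta\bigl(r\ln m\log(mr)\bigr)$ can be \emph{much} larger than $m$ itself. In that case your lower bound $m-n(\ell-1)$ is negative and the inequality is vacuous; for instance, the big-part function $\phi\equiv *$ then contributes a term $1$ to the union bound, and the argument collapses. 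The quantity you need to compare against $m-\mathbb{E}Y_i^\phi\sim m^{1-\Theta(c_1)}$ is not the total number of colours $n$ times $\ell$, but rather the number of colours that actually appear on $P_i$ times $\ell$. Without a separate argument bounding this number, the slack is uncontrolled.

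This is exactly the extra ingredient in the paper's proof. The orientation $D$ is chosen so that \emph{both} every acyclic $K_{\ell,\ell}$ \emph{and} every acyclic $K_\ell$ (a clique) has a directed cycle; the clique condition is what you omit. From the clique condition one deduces that each colour class touches fewer than $\ell$ parts, and then a pigeonhole argument shows that at least $\tfrac{3r}{4}$ of the parts use only $O(\ln^2(rm))$ colours. This replaces your slack $n(\ell-1)$ by $O(\ln^2(rm)\cdot\ell)=O(\ln^3(rm))$, which under $m\geq\ln^{\rho}r$ is $O(m^{3/\rho})$, and the comparison $m^{1-\delta}$ versus $m^{3/\rho}$ is precisely where $\rho>3$ enters. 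The paper then also changes the accounting: rather than counting vertices avoiding $\phi^{-1}(i)$, it counts, for each part $i$, the set $A_i$ of ``heavy'' colours (used $\geq\ell$ times on $P_i$), shows $|A_i|>\tfrac43\ln m$ for most $i$, observes that the $A_i$'s are pairwise disjoint by the $K_{\ell,\ell}$ condition, and derives a contradiction with the palette size $\lfloor r\ln m\rfloor$. Your union bound over $(r+1)^n$ big-part functions is not itself the obstacle (it can be beaten), but without the clique condition and the per-part colour-count bound, the slack term cannot be tamed.
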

\begin{proof} Let $V_1,...,V_r$ be the parts of $K_{m*r}$. The upper bound is implied by Theorem~\ref{thm:complete_multipartite}. For the lower bound, we can assume that $m$ is large enough; otherwise, we get the job done by picking a suitable $c_1$.

\vspace{2mm}
\textbf{Claim.} There is a constant $c$ and an orientation $D$ of $K_{m*r}$ such that, if $\ell\geq c\ln(rm)$, each subgraph of $K_{m*r}$ isomorphic to $K_{\ell}$ or to $K_{\ell,\ell}$ has a directed cycle in $D$.

\vspace{2mm}
\noindent\textit{Proof.} We orient the edges of $K_{m*r}$ at random, independently and with probability $\frac{1}{2}$. Let $E$ (resp.~$E'$) be the event that each subgraph of $K_{m*r}$ isomorphic to $K_{\ell}$ (resp.~$K_{\ell,\ell}$) has a directed cycle. By Lemma~\ref{lem:dense_bipartite}, $\mathbb P(E),\mathbb P(E')>\frac{1}{2}$ if $c$ is sufficently large. Hence $\mathbb P(E\cap E')>0$. %{\color{blue}[look if $E'$ can be precised in order to get better conditions on $\rho$]}
\qedblack
\vspace{2mm}

Let $k=\lfloor Cr\ln m\rfloor$, where $0<C\leq 1$ is a constant for now unspecified. We start by showing that there exists an assignment of $k$-lists from a palette $\mathscr C$ of $\lfloor r\ln m\rfloor$ colours such that, for any given set $A\subseteq\mathscr C$ of at most $\frac{4}{3}\ln m$ colours, each part has at least $\frac{1}{2}m^{1-\delta}$ vertices that avoid the colours from $A$ on their lists, where $\delta=2C\ln 5$.

We assign to each vertex $v$ of $D$ a random $k$-list $L(v)$ chosen independently and uniformly among the $\binom{|\mathscr C|}{k}$ possible $k$-lists. Given $i\in[r]$ and $A\subseteq\mathscr C$, consider the random variable $X_{i,A}=|\{v\in V_i\mid L(v)\cap A=\emptyset\}|$. Note that there are exactly $\binom{|\mathscr C|-|A|}{k}$ $k$-lists avoiding the colours in $A$. Devoting ourselves to the case $|A|=\left\lfloor\frac{4}{3}\ln m\right\rfloor$, we have that
\[\mathbb E X_{i,A}=m\frac{\binom{|\mathscr C|-|A|}{k}}{\binom{|\mathscr C|}{k}}\geq m\left(\frac{|\mathscr C|-|A|-k}{|\mathscr C|-k}\right)^k=m\left(1-\frac{|A|}{|\mathscr C|-k}\right)^k\]
\[\geq m\left(1-\frac{\frac{4}{3}\ln m}{(1-C)r\ln m -1}\right)^{Cr\ln m}\geq m\left(1-\frac{4}{5}\right)^{2C\ln m}=m^{1-\delta}\]
if $m$ is large enough and $C$ is not too large, using Remark~\ref{lem:calculus}. By the Simple Concentration Bound (Theorem~\ref{teo:simple_concentration}), \[\mathbb P(X_{i,A}<\frac{1}{2}m^{1-\delta})\leq\mathbb P(|X_{i,A}-\mathbb E X_{i,A}|>\frac{1}{2}m^{1-\delta})\leq 2e^{-\frac{1}{8}m^{1-2\delta}}.\] 
Let $E$ be the event that $X_{i,A}<\frac{1}{2}m^{1-\delta}$ for some $i\in[r]$ and $A\subseteq\mathscr C$ with $|A|\leq\frac{4}{3}\ln m$. We have that
\[\mathbb P(E)\leq r\binom{|\mathscr C|}{\left\lfloor\frac{4}{3}\ln m\right\rfloor}2e^{-\frac{1}{8}m^{1-2\delta}}\leq (r\ln m)^{\frac{4}{3}\ln m+1}2e^{-\frac{1}{8}m^{1-2\delta}}\] 
\[\leq 2e^{\left(m^{\frac{1}{\rho}}+\ln\ln m\right)\left(\frac{4}{3}\ln m+1\right)-\frac{1}{8}m^{1-2\delta}}\leq 2e^{2m^{\frac{1}{\rho}}\ln m-\frac{1}{8}m^{1-2\delta}}\]
%\[\mathbb P(E)\leq r\binom{|\mathscr C|}{\left\lfloor\frac{4}{3}\ln m\right\rfloor}2e^{-\frac{1}{8}m^{1-2\delta}}\leq r\left(e\frac{\lfloor r\ln m\rfloor}{\left\lfloor\frac{4}{3}\ln m\right\rfloor}\right)^{\left\lfloor\frac{4}{3}\ln m\right\rfloor}2e^{-\frac{1}{8}m^{1-2\delta}}\] 
%\[\leq r(er)^{\frac{4}{3}\ln m}2e^{-\frac{1}{8}m^{1-2\delta}}\leq 2e^{5\ln r\ln m-\frac{1}{8}m^{1-2\delta}}\leq 2e^{5m^{\frac{1}{\rho}}\ln m-\frac{1}{8}m^{1-2\delta}}\]
if $m$ is large enough. Consequently, if $\delta<\frac{1}{2}(1-\frac{1}{\rho})$ and $m$ is large enough, there exists a list assignment $L'$ satisfying the desired property. This is the assignment that we are going to use.

Now let $f$ be a proper colouring of $D$. We claim that there exists a set of indices $I\subseteq [r]$ of size at least $\frac{3r}{4}$ such that $|f(V_i)|\leq 4c\ln^2(rm)$ for each $i\in I$. Indeed, if more than $\frac{r}{4}$ parts are coloured with more than $4c\ln^2(rm)$ colours each, then one of the colours appears on more than $\frac{cr\ln^2(rm)}{|\mathscr C|}\geq c\frac{\ln^{2}(rm)}{\ln m}\geq c\ln(rm)$ parts. By the choice of $D$, $f$ is not proper, a contradiction.

For each $i\in[r]$ define the set $A_i=\{\gamma\in\mathscr C\mid |V_i\cap f^{-1}(\gamma)|\geq c\ln(rm)\}$. We claim that if $f$ is acceptable then $|A_i|>\frac{4}{3}\ln m$ for every $i\in I$. Indeed, otherwise, by the choice of the lists, at least $\frac{1}{2}m^{1-\delta}$ vertices of $V_i$ have been coloured with colours not from $A_i$. Thus one of these colours is used at least 
\[\frac{\frac{1}{2}m^{1-\delta}}{4c\ln^2(rm)}\leq c\ln(rm)\] 
times on $V_i$. If $m$ is large enough, this implies that \[m^{1-\delta}\leq 8c^2\ln^3(rm)\leq 8c^2(m^{\frac{1}{\rho}}+\ln m)^3\leq 9c^2m^{\frac{3}{\rho}}.\] If we further assume that $\delta<1-\frac{3}{\rho}$, we get a contradiction when $m$ is large. Therefore $|A_i|>\frac{4}{3}\ln m$ for every $i\in I$.

Now, by the choice of $D$, the sets $A_1,...,A_r$ are mutually disjoint. But then \[|\mathscr C|\geq\sum_{i=1}^r |A_i|\geq\sum_{i\in I} |A_i|>\frac{4}{3}|I|\ln m\geq r\ln m\geq |\mathscr C|.\] This contradiction shows that there is no acceptable proper colouring for the $k$-list assignment $L'$.
\end{proof}

%It would be desirable to know what happens for the rest of values of $m,r$. What is clear is that the bound of Theorem~\ref{teo:directed_multipartite} is not valid in general. Indeed, if $m\leq\ln r$ then Theorem~\ref{thm:list_chromatic_tournaments} implies that $\vec\chi_{\ell}(K_{m*r})\leq \vec\chi_{\ell}(K_{mr})\leq Cr$ for some constant $C$. {\color{blue}[This could be nonsense]}
We do not know what happens for other values of $m,r$. What is clear is that Theorem~\ref{teo:directed_multipartite} is not valid in general. Indeed, if $m\leq\ln r$ then the following theorem %{\color{blue}(Theorem~\ref{thm:list_chromatic_tournaments}) $\leftarrow$[is this needed?]} 
implies that $\vec\chi_{\ell}(K_{m*r})\leq \vec\chi_{\ell}(K_{mr})\leq cr$ for some constant $c$.
\begin{teo}\label{thm:list_chromatic_tournaments}\emph{\cite{BHL2018}} Let $T$ be a tournament of order $n$. Then $\vec\chi_{\ell}(T)\leq\frac{n}{\log_2 n}(1+\text o(1))$.
\end{teo}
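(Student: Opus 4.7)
The plan is to adapt the Erd\H{o}s--Moser iterative-extraction argument behind Theorem~\ref{thm:chromatic_tournaments} to the list setting. In the plain case, the bound $\vec\chi(T)\leq n/\log_2 n\,(1+o(1))$ follows by repeatedly removing a transitive (hence acyclic) subtournament of size $\lfloor\log_2 N\rfloor+1$ from the residual tournament on $N$ vertices and assigning it one fresh colour. The only new ingredient needed in the list setting is that the colour used on each extracted transitive subtournament must belong to every list in that subtournament.

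Concretely, I would fix a small $\varepsilon>0$, set $k=\lceil(1+\varepsilon)n/\log_2 n\rceil$, and let $L$ be an arbitrary $k$-list assignment. The goal is to build an acceptable proper colouring in stages. At a stage with residual subtournament $T'$ of order $N$, the central task is to find a colour $c^*$ and a transitive subtournament $S\subseteq V(T')$ of size roughly $\log_2 N$ such that $c^*\in L(v)$ for all $v\in S$. A natural way to do this is a probabilistic palette pre-processing: hash the colour palette into at most $N/\log N$ buckets (conditioning on each individual list remaining collision-free after hashing), so that by pigeonhole some effective colour $c^*$ lies in at least $\Omega(N/\log N)$ of the lists on $V(T')$; then Erd\H{o}s--Moser applied to the induced subtournament on those vertices produces a transitive subtournament of order $\log_2 N-O(\log\log N)$, which suffices. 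Painting $S$ with $c^*$ and iterating yields a proper $L$-colouring in roughly $n/\log_2 n$ rounds, with sub-logarithmic losses absorbed in the $(1+o(1))$ factor.

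The main obstacle is the palette-control step, which is what genuinely distinguishes the list case from the plain case: in the plain setting one can freely introduce a new colour at each round, whereas in the list setting the colour must be drawn from the intersection of up to $\log_2 N$ prescribed lists. Managing this over the entire sequence of $\Theta(n/\log_2 n)$ extractions, while simultaneously tracking how the lists shrink as colours are consumed, will presumably require the probabilistic palette reduction sketched above combined with concentration (or a Lov\'asz Local Lemma argument in the spirit of the earlier sections), together with careful bookkeeping to keep the cumulative error within the $(1+o(1))$ slack that the statement allows.
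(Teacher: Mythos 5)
The theorem you are proving is quoted in the paper from \cite{BHL2018}; the paper gives no proof of its own. Judged on its own merits, your sketch has a genuine gap in its central step.

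The hashing idea --- ``hash the colour palette into at most $N/\log N$ buckets \ldots{} so that by pigeonhole some effective colour $c^*$ lies in at least $\Omega(N/\log N)$ of the lists'' --- does not deliver what you need. Two vertices whose lists contain colours landing in the same bucket need not share an \emph{actual} colour, so a ``common effective colour'' cannot be used to paint a transitive subtournament monochromatically, which is exactly the role you assign to $c^*$. Merging colours into buckets is not a legitimate reduction in list colouring: giving the bucket ``colour'' to two vertices really means giving them two distinct original colours, which creates no monochromatic set at all. (A secondary problem: with $k\approx n/\log_2 n$ and about $N/\log N$ buckets, the probability that a single list of size $k$ is collision-free is roughly $e^{-k^2\log N/(2N)}=e^{-\Theta(n/\log n)}$ for $N$ near $n$, so the event you propose to condition on has vanishing probability; the conditioning is not merely inconvenient, it is essentially on an impossible event.)

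What replaces the hashing is a dichotomy on the lists themselves, not on a compressed palette. If some colour $c$ \emph{literally} lies in at least $m:=\lceil n/\log_2^2 n\rceil$ of the current lists, apply Erd\H{o}s--Moser to the subtournament on those vertices to obtain a transitive subtournament of order at least $\log_2 m+1=\log_2 n-O(\log\log n)$, colour it $c$, delete those vertices, and delete $c$ from every remaining list. Otherwise every colour appears on fewer than $m$ lists; after at most $(1+o(1))n/\log_2 n$ extraction rounds each remaining list still has size at least $(\varepsilon-o(1))n/\log_2 n\geq m$, so Hall's condition holds and the remaining vertices can be given pairwise distinct colours from their lists via a system of distinct representatives (distinct colours trivially yield no monochromatic cycle). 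The bookkeeping you were worried about then works out: the $(1+o(1))$ slack absorbs both the sublogarithmic loss in the transitive subtournament sizes and the one-colour-per-round deletions. Your high-level plan (iterated Erd\H{o}s--Moser extraction while tracking shrinking lists) is the right shape, but the palette-hashing step must be replaced by this genuine case split, since hashing cannot manufacture a colour common to many lists where none existed.
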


\subsection{List dichromatic number of Kneser graphs}

Here we investigate the list dichromatic number of Kneser graphs. 
The list chromatic number of Kneser graphs was recently studied by Bulankina and Kupavskii. They proved the following two results.

\begin{teo}\label{thm:BK1}\emph{\cite{BulankinaKupavskii2022}} For any positive integers $n,k$ with $1\leq k\leq\frac{n}{2}$ we have $\chi_{\ell}(KG(n,k))\leq n\ln\frac{n}{k}+n$.
\end{teo}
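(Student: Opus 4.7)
The plan is to apply the classical random-anchor argument (a $k$-uniform analogue of the Erd\H{o}s--Rubin--Taylor method for list-colouring bipartite graphs). Set $t=\lceil n\ln(n/k)+n\rceil$, let $L$ be any $t$-list assignment to $KG(n,k)$, and let $\mathscr{C}=\bigcup_v L(v)$ be the (finite) palette used. First I would sample, independently and uniformly at random, a representative element $x_c\in[n]$ for each colour $c\in\mathscr{C}$. Then I would define a colouring $f$ by assigning to each vertex $v\in\binom{[n]}{k}$ an arbitrary colour $c\in L(v)$ with $x_c\in v$, whenever such a colour exists.

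The crucial structural observation is that whenever $f$ is defined on the whole vertex set, it is automatically a proper $L$-colouring: if two disjoint vertices $u,v$ were both assigned the same colour $c$, then $x_c$ would lie in $u\cap v=\emptyset$, a contradiction. So the whole argument reduces to showing that, with positive probability, every vertex has at least one colour $c\in L(v)$ with $x_c\in v$.

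For a fixed $v\in\binom{[n]}{k}$, the events $\{x_c\in v\}$ for $c\in L(v)$ are mutually independent and each has probability $k/n$. Hence the probability that $v$ has no usable colour is at most $(1-k/n)^t\leq e^{-tk/n}$. A union bound over the $\binom{n}{k}$ vertices bounds the overall failure probability by $\binom{n}{k}e^{-tk/n}\leq (en/k)^k e^{-tk/n}$. Plugging in $t\geq n\ln(n/k)+n=n\ln(en/k)$ makes this expression at most $1$; the strict inequality $\binom{n}{k}<(en/k)^k$ (valid for $k\geq 1$) then makes it strictly less than $1$, so a good realisation of the $x_c$'s exists, yielding a proper $L$-colouring.

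I do not anticipate any real obstacle: the random-anchor scheme bakes in both properness and the quantitative estimate, and the only inputs are the elementary bound $\binom{n}{k}\leq (en/k)^k$ together with a union bound. The only bookkeeping point is to sample $x_c$ over the finitely many colours appearing in some $L(v)$ rather than over all of $\mathbb{Z}^+$, which is immediate.
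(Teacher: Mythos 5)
Your random-anchor argument is the standard route to this bound (the paper cites Theorem~\ref{thm:BK1} from Bulankina and Kupavskii without proof, and this is the argument they use). The structure is sound: sampling an independent uniform representative $x_c\in[n]$ for each colour $c$, colouring $v$ by any $c\in L(v)$ with $x_c\in v$, observing that such a colouring is automatically proper on $KG(n,k)$, and then using $\binom{n}{k}(1-k/n)^t < 1$ via a union bound.

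One point deserves care, though. You set $t=\lceil n\ln(n/k)+n\rceil$ and show the failure probability is below $1$, which yields $\chi_\ell(KG(n,k))\leq\lceil n\ln(n/k)+n\rceil$. The statement to be proved is $\chi_\ell(KG(n,k))\leq n\ln\frac{n}{k}+n$; since $\chi_\ell$ is an integer, this is the assertion $\chi_\ell\leq\lfloor n\ln(n/k)+n\rfloor$, which is stronger by $1$ whenever $n\ln(n/k)+n$ is not an integer. With $t=\lfloor n\ln(n/k)+n\rfloor$, your chain of crude estimates only gives
\[
\binom{n}{k}(1-k/n)^{t}<\left(\frac{en}{k}\right)^k e^{-tk/n}\leq e^{k/n}\leq e^{1/2},
\]
which is not below $1$. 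To close this, sharpen one of the two inequalities you took with equality: either use a Stirling-type bound $k!\geq\sqrt{2\pi k}\,(k/e)^k$ (so $\binom{n}{k}\leq(en/k)^k/\sqrt{2\pi k}$), or use $-\ln(1-k/n)\geq k/n + k^2/(2n^2)$ rather than $-\ln(1-k/n)\geq k/n$. Either correction recovers the extra unit of slack. This is a bookkeeping fix rather than a conceptual one; the idea of the proof is correct.
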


\begin{teo}\label{thm:BK2}\emph{\cite{BulankinaKupavskii2022}} For every 
$\varepsilon\ > 0$, there exists a constant $c_{\varepsilon} > 0$ such that $\chi_{\ell}(KG(n,k))\geq c_{\varepsilon}n\ln n$ for all $n,k$ with $2\leq k\leq n^{\frac{1}{2}-\varepsilon}$.
\end{teo}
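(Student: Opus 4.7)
The plan is to adapt the random-list technique Alon employs for complete multipartite graphs (see the proof of Theorem~\ref{teo:directed_multipartite}) to the Kneser setting. Fix a palette $\mathscr C$ with $|\mathscr C|=\lceil 2 n\ln n\rceil$, and, independently for each vertex $v\in\binom{[n]}{k}$, let $L(v)$ be a uniformly random $K$-subset of $\mathscr C$, where $K=\lfloor c_\varepsilon n\ln n\rfloor$ for a small constant $c_\varepsilon=c_\varepsilon(\varepsilon)>0$ to be tuned later. The goal is to show that with positive probability no proper colouring of $KG(n,k)$ is accepted by $L$, which would yield $\chi_\ell(KG(n,k))>K$.

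The key structural input is the Erd\H{o}s--Ko--Rado theorem together with a stability statement: any independent set in $KG(n,k)$ is an intersecting family of $k$-subsets, hence of size at most $\binom{n-1}{k-1}$, and when $n$ is much larger than $k$ (which is forced by $k\leq n^{\frac{1}{2}-\varepsilon}$) every intersecting family whose size is a nontrivial fraction of $\binom{n-1}{k-1}$ is essentially a \emph{star}, i.e.\ the family of all $k$-subsets through some fixed element of $[n]$. In particular, any proper colouring of $KG(n,k)$ uses at least $\binom{n}{k}/\binom{n-1}{k-1}=n/k$ distinct colours.

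The heart of the argument is a union bound over the possible ``shapes'' of a putative proper $L$-colouring $f$. Partition the colour classes of $f$ into \emph{large} ones (of size close to $\binom{n-1}{k-1}$), which by stability can be indexed by one of only $n$ elements of $[n]$ and therefore jointly consume at most $n=o(n\ln n)$ colours, and \emph{small} ones (of size $O(\binom{n-1}{k-1}/\ln n)$). Since the small classes must cover at least a constant fraction of the $\binom{n}{k}$ vertices, they contribute $\Omega(n\ln n/k)$ distinct colours, and each such colour has to lie in the lists of every vertex of its class. A Chernoff/Hoeffding-type tail bound shows that, for $K\leq c_\varepsilon n\ln n$ with $c_\varepsilon$ small, the probability that a fixed assignment of intersecting families to colours is realised by $L$ is super-polynomially small; pairing this with an upper bound on the number of candidate partitions into intersecting families of the prescribed sizes gives the desired union bound.

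The main obstacle is the counting step: one needs a clean estimate for the number of ways to partition a substantial part of $\binom{[n]}{k}$ into intersecting families of a prescribed size, and a careful choice of the threshold separating ``large'' from ``small'' classes so that both the star-based contribution and the probability estimate go through. The hypothesis $k\leq n^{\frac{1}{2}-\varepsilon}$ enters twice: once to make the quantitative stability version of Erd\H{o}s--Ko--Rado applicable in the desired regime, and once through the ratio $\binom{n-1}{k-1}/\binom{n}{k}=k/n$ that dictates the right thresholds and ensures the small classes really have to number $\Theta(n\ln n/k)$.
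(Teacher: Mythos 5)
First, a point of context: Theorem~\ref{thm:BK2} is cited from Bulankina and Kupavskii and is used as a black box in this paper; the paper itself offers no proof of it, so there is no internal argument to compare against. The closest thing the paper has is the machinery it develops to prove the \emph{directed} analogue (Theorem~\ref{thm:list_dichromatic_Kneser}), built on the $(s,t)$-cover framework of Lemma~\ref{lem:BK_general} and the explicit small cover $\mathcal C_{G_n}$ of independent sets of the auxiliary graph $G_n=K_n\times K_n$.

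Measured against that framework, the central step of your proposal --- the one you yourself flag as ``the main obstacle'' --- is genuinely missing, and it is not a routine estimate. A union bound of the form you describe requires that the collection of ``shapes'' one unions over be small, i.e., that there be a small family $\mathcal C$ of subsets of $\binom{[n]}{k}$ such that \emph{every} intersecting family (not just the large ones) lies inside some member of $\mathcal C$, with $|\mathcal C|$ and $\max_{C\in\mathcal C}|C|$ controlled. The Erd\H{o}s--Ko--Rado theorem and its stability version give structure only for intersecting families whose size is a constant fraction of $\binom{n-1}{k-1}$; they say nothing about small intersecting families, which can be essentially arbitrary, and the number of such families (or of partitions using them) is astronomically larger than $|\mathscr C|^{|\mathscr C|}$. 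Without a replacement for Lemma~\ref{lem:BK_general}'s $(s,t)$-collection in the Kneser setting, the product $(\text{number of shapes})\times(\text{probability of a fixed shape being accepted})$ does not go to zero, and the argument does not close.

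A second, subtler problem is the claim that ``the small classes must cover at least a constant fraction of the $\binom{n}{k}$ vertices.'' The $n$ stars alone already cover $\binom{[n]}{k}$ --- e.g.\ the partition by smallest element, $A_i=\{F:\min F=i\}$, writes $\binom{[n]}{k}$ as a disjoint union of subfamilies of stars --- so a proper colouring can have essentially all of its mass in large, near-star classes, with only a vanishing tail in ``small'' classes. Your lower bound of $\Omega(n\ln n/k)$ on the number of small classes therefore does not follow from the size threshold alone, and even if it did, it would still be smaller than the target $\Omega(n\ln n)$ by a factor of $k$, which grows with $n$ in the regime $k\le n^{1/2-\varepsilon}$. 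The link from ``many colour classes'' to ``large choice number'' must instead come from the probabilistic acceptance bound plus the counting of shapes, and this is exactly the part that is unresolved. To make this approach work you would need either an explicit small cover of all intersecting families in $KG(n,k)$, or (as the paper does for the directed version, and as Bulankina and Kupavskii's own argument does) a reduction to an auxiliary graph such as $G_n$ or a tensor product $KG(n_1,k_1)\times KG(n_2,k_2)$ in which independent sets have a tractable cover.
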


%{\color{blue}[Comment that in fact they have better (precise) constants?]} 
However, good lower bounds for larger values of $k$ are still unknown. Clearly,
the upper bound of Theorem \ref{thm:BK1} trivially generalises to the dichromatic number.
The rest of the subsection is devoted to the proof of the directed analogue of Theorem~\ref{thm:BK2};
that is, we show that the lower bound can be strengthened to digraphs. The proof is achieved by a sequence of lemmas, which involve the argument of Bulankina and Kupavskii, as well as ideas from Mohar and Wu \cite{MoharWu2016}.
As in Theorem \ref{thm:BK2}, we do not know if the bound on $k$ can be extended to $2\leq k\leq n^{1-\varepsilon}$ for an arbitrarily small $\varepsilon>0$.
%is necessary. 

\begin{teo}\label{thm:list_dichromatic_Kneser} For every $\varepsilon > 0$ there exists a constant $c_{\varepsilon} > 0$ such that $\vec\chi_{\ell}(KG(n,k))\geq c_{\varepsilon}n\ln n$ for all $n,k$ with $2\leq k\leq n^{\frac{1}{2}-\varepsilon}$.
\end{teo}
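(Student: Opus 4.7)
The plan is to adapt the Bulankina--Kupavskii argument from Theorem~\ref{thm:BK2} by first orienting $KG(n,k)$ so as to destroy small balanced bicliques, in the spirit of Theorem~\ref{teo:directed_multipartite}. I would orient each edge of $KG(n,k)$ independently with probability $1/2$; Lemma~\ref{lem:dense_bipartite} applied with $\ell=\lceil 5\log_2\binom{n}{k}\rceil\leq 5k\log_2 n=O(n^{1/2-\varepsilon}\log n)$ then yields, with positive probability, an orientation $D$ of $KG(n,k)$ in which every copy of $K_{\ell,\ell}$ contains a directed cycle. On this $D$ I would then run the Bulankina--Kupavskii random $s$-list construction with $s=\lfloor c_\varepsilon n\ln n\rfloor$ and a palette $\mathscr{C}$ of size $\Theta(n)$, aiming to show that with positive probability no proper colouring of $D$ is accepted.

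The critical modification is in the per-colour-class bound. In the undirected argument, any colour class of a proper colouring of $KG(n,k)$ is an intersecting family, and hence of size at most $\binom{n-1}{k-1}$ by Erd\H{o}s--Ko--Rado. In our setting a colour class $f^{-1}(\gamma)$ of a proper colouring of $D$ is only required to be acyclic in $D$, but by the choice of $D$ it cannot contain a $K_{\ell,\ell}$-substructure. In particular it cannot contain $2\ell$ pairwise disjoint $k$-sets, since any such matching $S_1,\dots,S_{2\ell}$ splits into a monochromatic $K_{\ell,\ell}$ via $\{S_1,\dots,S_\ell\}$ versus $\{S_{\ell+1},\dots,S_{2\ell}\}$. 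By Erd\H{o}s's theorem on matching-free $k$-uniform families in the stable regime $n\gg k$, we conclude that $|f^{-1}(\gamma)|\leq (2\ell-1)\binom{n-1}{k-1}$.

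The remainder of the proof would follow Bulankina--Kupavskii, with this new bound replacing $\binom{n-1}{k-1}$ throughout. The main obstacle I anticipate is the bookkeeping: the loss factor $\Theta(\ell)=n^{1/2-\varepsilon+o(1)}$ per colour class must be absorbed without damaging the scaling $c_\varepsilon n\ln n$. This should succeed because the critical inequality in the Bulankina--Kupavskii argument is exponential in $s=c_\varepsilon n\ln n$, while the parasitic contribution from the extra $\ell$-factor enters the exponent only through terms of order $\log\ell=o(\log n)$; shrinking $c_\varepsilon$ to a sufficiently small constant $c_\varepsilon=c_\varepsilon(\varepsilon)>0$ thus preserves the conclusion. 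A further subtlety is that the random orientation and the random lists should be handled by two successive applications of the probabilistic method, so that both good events --- $D$ destroys every $K_{\ell,\ell}$ of $KG(n,k)$, and the lists admit no proper colouring of $D$ --- hold simultaneously.
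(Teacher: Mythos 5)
Your opening moves are sound: Lemma~\ref{lem:dense_bipartite} does give an orientation $D$ of $KG(n,k)$ in which every $K_{\ell,\ell}$ with $\ell=\Theta(k\log n)$ has a directed cycle, and it is correct that an acyclic set of $D$ then has matching number below $2\ell$, hence size at most $(2\ell-1)\binom{n-1}{k-1}$ by the Erd\H{o}s matching theorem in the stable regime. The gap is in the sentence claiming the rest "would follow Bulankina--Kupavskii, with this new bound replacing $\binom{n-1}{k-1}$ throughout." The random-lists machinery (abstracted in the paper as Lemma~\ref{lem:BK_general}) is not driven by a \emph{size} bound on colour classes; it needs a \emph{structural cover}: a small family $\mathcal{C}$ of template sets such that every admissible colour class sits inside some member of $\mathcal{C}$. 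The probability bound carries a factor $s^u$ with $s=|\mathcal{C}|$, so $s$ must stay tame. In the undirected case, Hilton--Milner stability puts every large intersecting family inside one of the $n$ stars (plus a small perturbation), giving a polynomial-size cover. A $K_{\ell,\ell}$-free acyclic set in your random orientation $D$ has no such rigidity: it can be a near-extremal union of $\Theta(k\log n)$ stars, and the number of such unions is $\binom{n}{\Theta(k\log n)}$, which is superpolynomial once $k\to\infty$ with $n$. That does not enter the computation as a benign $\log\ell$ term; it wrecks the union bound, and no constant $c_\varepsilon$ rescues it. So the size bound you obtain is correct but strictly weaker than the structure the argument requires, and the difference is fatal.

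The paper sidesteps this by not trying to cover acyclic subfamilies of $KG(n,k)$ at all. It embeds the product $KG(\lfloor n/2\rfloor,k-2)\times G_{\lfloor n/4\rfloor}$ (with $G_m=K_m\times K_m$ the rook graph) into $KG(n,k)$, exhibits a deterministic orientation of $K_2\times G_{\lfloor n/4\rfloor}$ whose acyclic sets are covered by one line plus an $O(\ln n)\times O(\ln n)$ box (Lemmas~\ref{lem:bipartite_in_rook}--\ref{lem:rook_cover2}), and then uses a transfer lemma (Lemma~\ref{lem:MoharWu_general}) that converts the \emph{undirected} lower bound $\chi_\ell(KG(\lfloor n/2\rfloor,k-2))=\Omega(n\ln n)$ of Theorem~\ref{thm:BK2} into a lower bound on $\vec\chi_\ell$ of the tensor product. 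In other words, the directed difficulty is confined to the rook-graph factor, where acyclic sets do have the kind of stability your argument would need but which a random orientation of the whole Kneser graph does not supply. If you want to push your more direct route through, you would need a stability theorem saying that large $K_{\ell,\ell}$-free subfamilies of $\binom{[n]}{k}$ in a suitable orientation admit a cover by $\mathrm{poly}(n)$ templates; the matching-number bound alone is far from that.
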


Let $G=(V,E)$ be a graph, $\mathcal C$ a collection of subsets of $V$ and $s,t$ positive integers. We say that $\mathcal C$ is an \emph{$(s,t)$-collection} of $V$ if
\begin{enumerate}[(i)]
   \item $|\mathcal C|\leq s$;
   \item $\forall C\in\mathcal C\ \ \, |C|\leq t$.
\end{enumerate}
Given a list assignment $L$, we denote by $U = \cup_{v \in V(G)} L(v)$ the total set of colours, referred to as the \emph{palette}, and we set $u=|U|$. The partitions $P$ of $V$ considered in the sequel will always have $u$
(not necessarily non-empty) parts (we always implicitly or explicitly assume that the partitions are \emph{acyclic}, i.e., that each part of $P$ induces an acyclic digraph). It will be convenient to regard as distinct any two partitions arising from different colourings. Thus, partitions will be thought as indexed by $U$ (but for simplicity we will continue calling them just ``partitions"). 
%A: I actually don't think we need the above "distinction comment"

We say that a partition $P$ of $V$ is \emph{covered} by an $(s,t)$-collection $\mathcal C$ 
(or that $\mathcal C$ is an \emph{$(s,t)$-cover} of $P$) if each part determined by $P$ is contained in some $C\in\mathcal C$.  Let $P=(P_1,...,P_u)$ be a partition. 
We say that a list assignment $L$ of $G$ \emph{accepts} $P$ if, for every $i\in[u]$ and every $v \in P_i$, $i \in L(v)$. Otherwise, we say that $L$ \emph{rejects} $P$.

%{\color{gray}In what follows $u\geq\ell_1\geq\ell_2$ are integers, as are $n,s,t$. The total set of colours, referred to as the \emph{palette}, will have cardinality $u$. Define the function $g(\ell_1,\ell_2,n,s,t,u):=s^u e^{-\frac{n}{2}2^{-\frac{4\ell_2 tu}{(\ell_1-\ell_2)n}}}$.}

In what follows, $\ell_1$ and $\ell_2$ are integers. Define
the function $g(\ell_1,\ell_2,n,s,t,u):=s^u e^{-\frac{n}{2}2^{-\frac{4\ell_2 tu}{(\ell_1-\ell_2)n}}}$.

\begin{lemma}\label{lem:BK_general} Let $G=(V,E)$ be an undirected graph of order $n$, $\mathcal C$ an $(s,t)$-collection of $V$ and $\mathcal P$ the family of partitions of $V$ covered by $\mathcal C$. Let $L_1$ be an $\ell_1$-list assignment for $G$ from a palette of $u$ colours and $L_2$ a random $\ell_2$-list assignment for $G$ where, for every $v\in V$, $L_2(v)$ is chosen independently and equiprobably among $\binom{L_1(v)}{\ell_2}$. If $4tu\leq (\ell_1-\ell_2)n$, then \[\mathbb P(L_2\text{ accepts some }P\in\mathcal P)<
g(\ell_1,\ell_2,n,s,t,u)=s^u e^{-\frac{n}{2}2^{-\frac{4\ell_2 tu}{(\ell_1-\ell_2)n}}}.\]
\end{lemma}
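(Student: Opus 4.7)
The plan is to apply a union bound over ``cover functions'' $\phi:[u]\to\mathcal C$ and then exploit the independence of the random lists. Any partition $P=(P_1,\dots,P_u)\in\mathcal P$ admits at least one $\phi$ with $P_i\subseteq\phi(i)$ for every $i\in[u]$, and the number of such functions is at most $s^u$, so
\[
\mathbb P\bigl(L_2\text{ accepts some }P\in\mathcal P\bigr)\ \le\ \sum_{\phi:[u]\to\mathcal C}\mathbb P\bigl(\exists P\in\mathcal P\text{ covered by }\phi,\ L_2\text{ accepts }P\bigr).
\]
Writing $S_\phi(v)=\{i\in[u]:v\in\phi(i)\}$ and $d_v=|S_\phi(v)|$, any accepted covered partition forces $S_\phi(v)\cap L_2(v)\neq\emptyset$ for every $v$ (one needs a valid part-index for $v$ that lies in its list). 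Hence, by the independence of the lists $L_2(v)$, each inner probability is at most $\prod_{v\in V}(1-p_v)$, where $p_v:=\mathbb P\bigl(S_\phi(v)\cap L_2(v)=\emptyset\bigr)$.

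The second step is a clean lower bound on $p_v$. Because $L_2(v)$ is a uniformly random $\ell_2$-subset of $L_1(v)$ and $|S_\phi(v)\cap L_1(v)|\le d_v$, I would estimate
\[
p_v\ \ge\ \binom{\ell_1-d_v}{\ell_2}\Big/\binom{\ell_1}{\ell_2}\ =\ \prod_{i=0}^{\ell_2-1}\Bigl(1-\tfrac{d_v}{\ell_1-i}\Bigr)\ \ge\ \Bigl(1-\tfrac{d_v}{\ell_1-\ell_2}\Bigr)^{\ell_2}.
\]
Calling a vertex \emph{good} when $d_v\le(\ell_1-\ell_2)/2$ and invoking the elementary inequality $1-x\ge 2^{-2x}$ on $[0,1/2]$ then gives $p_v\ge 2^{-2\ell_2 d_v/(\ell_1-\ell_2)}$ on the good set $V_1$.

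The key bookkeeping identity is $\sum_{v\in V}d_v=\sum_{i\in[u]}|\phi(i)|\le tu$. Combined with the hypothesis $4tu\le(\ell_1-\ell_2)n$, this forces fewer than $n/2$ vertices to satisfy $d_v>(\ell_1-\ell_2)/2$, so $|V_1|\ge n/2$. Applying Jensen's inequality to the convex function $x\mapsto 2^{-x}$ and then using $\sum_{v\in V_1}d_v\le tu$ together with $|V_1|\ge n/2$ yields
\[
\sum_{v\in V}p_v\ \ge\ \sum_{v\in V_1}p_v\ \ge\ |V_1|\cdot 2^{-\frac{2\ell_2}{(\ell_1-\ell_2)|V_1|}\sum_{v\in V_1}d_v}\ \ge\ \frac{n}{2}\cdot 2^{-\frac{4\ell_2 tu}{(\ell_1-\ell_2)n}}.
\]
Plugging $\prod_v(1-p_v)\le e^{-\sum_v p_v}$ (strict, because at least one $p_v$ is positive) into the union bound over the $s^u$ functions $\phi$ gives the claimed inequality.

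The main obstacle is the good/bad dichotomy: the elementary estimate $1-x\ge 2^{-2x}$ breaks down near $x=1$, so the vertices with $d_v$ close to $\ell_1-\ell_2$ must be discarded. The hypothesis $4tu\le(\ell_1-\ell_2)n$ is precisely what guarantees that at least half the vertices remain good, and the factor of two it introduces is what produces the leading $n/2$ and turns the ``natural'' exponent $2\ell_2 tu/((\ell_1-\ell_2)n)$ into the stated $4\ell_2 tu/((\ell_1-\ell_2)n)$.
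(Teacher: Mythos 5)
Your proposal is correct and follows essentially the same strategy as the paper's proof: a union bound over the (at most $s^u$) cover tuples $\phi:[u]\to\mathcal C$, the identity $\sum_v d_v=\sum_i|\phi(i)|\le tu$, a counting argument showing at least half the vertices have small degree, a lower bound $p_v\ge\bigl(1-\tfrac{d_v}{\ell_1-\ell_2}\bigr)^{\ell_2}$ obtained from the ratio of binomials, and finally $\prod(1-p_v)\le e^{-\sum p_v}$. The one cosmetic difference is in the handling of the ``good'' vertices: the paper takes the set $W_{\mathrm C}=\{v:d_v\le\tfrac{2tu}{n}\}$ and derives a \emph{uniform} lower bound $p_v\ge p:=2^{-4\ell_2 tu/((\ell_1-\ell_2)n)}$ there (invoking the monotonicity Remark~\ref{lem:calculus}), whereas you take the larger set $\{v:d_v\le\tfrac{\ell_1-\ell_2}{2}\}$, apply the pointwise estimate $1-x\ge 2^{-2x}$, and then recover the very same average bound by Jensen's inequality on the convex map $x\mapsto 2^{-x}$. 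Both routes exploit $4tu\le(\ell_1-\ell_2)n$ in the same two places (to guarantee $|V_1|\ge n/2$ and to keep the relevant quantity in $[0,1/2]$), and both yield identical constants, so this is a variant phrasing rather than a genuinely different argument.
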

\begin{proof} Let $\mathrm C=(C_1,...,C_u)\in\mathcal C^u$ be any $u$-tuple of elements of $\mathcal C$. For every $v\in V$, let $r_{\mathrm C}(v)$ be the number of indices $i\in[u]$ such that $v\in C_i$. Consider the subset of vertices $W_{\mathrm C}=\{v\in V\mid r_{\mathrm C}(v)\leq\frac{2tu}{n}\}$. We claim that $|W_{\mathrm C}|>\frac{1}{2}n$. Indeed, otherwise \[tu\geq\sum_{i=1}^u|C_i|=\sum_{v\in V}r_{\mathrm C}(v)\geq\sum_{v\in V\setminus W_{\mathrm C}}r_{\mathrm C}(v)>tu.\] 
Moreover, for any $v\in W_{\mathrm C}$ the probability $p_{\mathrm C}(v)$ that $v\notin\bigcup_{i\in L_2(v)} C_i$ is at least
\[\frac{\binom{\ell_1-r_{\mathrm C}(v)}{\ell_2}}{\binom{\ell_1}{\ell_2}}=\prod_{k=1}^{\ell_2}\frac{\ell_1-\ell_2-r_{\mathrm C}(v)+k}{\ell_1-\ell_2+k}\geq\left(1-\frac{r_{\mathrm C}(v)}{\ell_1-\ell_2}\right)^{\ell_2}\]
\[\geq\left(1-\frac{2tu}{(\ell_1-\ell_2)n}\right)^{\ell_2}\geq\left(\frac{1}{2}\right)^{\frac{4\ell_2 tu}{(\ell_1-\ell_2)n}}=:p,\]
using Remark~\ref{lem:calculus} and the inequality $4tu\leq (\ell_1-\ell_2)n$. Therefore, the probability that there is some $u$-tuple $\mathrm C=(C_1,...,C_u)$ of elements of $\mathcal C$ such that $v\in\bigcup_{i\in L_2(v)}C_i$ for every $v\in V$ is at most
\[\sum_{\mathrm C\in{\mathcal C}^u}\,\prod_{v\in W_\mathrm C}(1-p_\mathrm C(v))<s^u\left(1-p\right)^{\frac{1}{2}n}\leq s^u e^{-\frac{1}{2}np}.\]
%{\color{blue}A: is $C^u$ defined? We can just replace $\Sigma$ by $\binom{s}{u}$ if need be. More importantly, strictly speaking we probably do not need to say this, but should we not say that any $P$ has exactly $u$ (not necessarily non-empty) parts.} 
%The result follows from the fact that every $P\in\mathcal P$ is covered by $\mathcal C$.
Since every $P\in\mathcal P$ is covered by $\mathcal C$, each of the %{\color{gray}(at most)}
$u$ parts of any such $P$ is contained in some $C\in\mathcal C$, so the result follows.
\end{proof}

Let $G,H$ be graphs. The \emph{tensor product} $G\times H$ of $G$ and $H$ is the graph with vertex set $V(G)\times V(H)$ where two vertices $(v,x)$ and $(w,y)$ are adjacent if and only if $v,w$ are adjacent in $G$ and $x,y$ are adjacent in $H$. %The \emph{Cartesian product} $G\square H$ is the graph with vertex set $V(G)\times V(H)$ where two vertices $(v,x)$ and $(w,y)$ are adjacent precisely when $\{v,w\}\in E(G)$ and $x=y$, or when $\{x,y\}\in E(H)$ and $v=w$. 
The tensor product of complete graphs $K_n\times K_n$ is going to play an auxiliary role; we denote it by $G_n$. Given $S\subseteq V(G_n)$, we call $\pi_1(S)$ and $\pi_2(S)$ the projection of $S$ to the first and second coordinate, respectively. The \emph{rows} (resp.~\emph{columns}) of $S$ are the subsets of $S$ of the form $S\cap(\{i\}\times [n])$ (resp.~$S\cap([n]\times\{i\})$), where $i\in[n]$. Now we give some properties of $G_n$.  %Now we are going to prove some properties about the complements of square rook graphs. Set $G_n=(K_n\square K_n)^{\mathrm c}$ {\color{blue}[note that $G_n=K_n\times K_n$, think what's more convenient]}. Given $S\subseteq V(G_n)$, we call $\pi_1(S)$ and $\pi_2(S)$ the projection of $S$ to the first and second coordinates, respectively.

\begin{lemma}\label{lem:bipartite_in_rook} %There is a constant $c_1\in(12,\infty)$ such that the following holds. 
For any $n\geq 2$, there is an orientation $D_n$ of $G_n$ (resp.~of $K_2\times G_n$) such that, for every $S,T\subseteq V(G_n)$ satisfying 
\begin{enumerate}[i)]
   %\item $|S|,|T|\geq c_1\ln n$ and
   \item $|S|,|T|\geq 30\ln n$ and
   \item $\pi_i(S)\cap\pi_i(T)=\emptyset$ for $i\in\{1,2\}$,
\end{enumerate}
the subdigraph of $D_n$ induced by $S\cup T$ (resp.~by $(\{1\}\times S)\cup(\{2\}\times T)$) has a directed cycle.
\end{lemma}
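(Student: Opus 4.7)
The plan is a standard first-moment probabilistic argument, in the spirit of Lemma~\ref{lem:dense_bipartite}. I will orient each edge of $G_n$ (respectively, of $K_2\times G_n$) independently and uniformly at random, and show that with positive probability the resulting $D_n$ meets the conclusion. For small $n$ the hypothesis is vacuous: condition (ii) forces $|S|+|T|\leq n^2$ while condition (i) demands $|S|+|T|\geq 60\ln n$, so nothing needs to be proved unless $n^2\geq 60\ln n$, and in that regime any orientation works.

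Fix $S,T\subseteq V(G_n)$ satisfying (i) and (ii). The key point is that any $s\in S$ and $t\in T$ disagree in both coordinates by (ii), so they are adjacent in $G_n$; likewise $(1,s)$ and $(2,t)$ are adjacent in $K_2\times G_n$. Hence the induced subdigraph of $D_n$ on $S\cup T$ (resp.\ on $(\{1\}\times S)\cup(\{2\}\times T)$) contains a copy of the complete bipartite graph $K_{|S|,|T|}$ between the two sides; in the $K_2\times G_n$ case it actually equals this bipartite graph, because any edge inside $\{1\}\times S$ or inside $\{2\}\times T$ would require differing first coordinates. Exactly as in Lemma~\ref{lem:dense_bipartite}, each acyclic orientation of $K_{a,b}$ extends uniquely to a transitive tournament on $a+b$ vertices, so among the $2^{ab}$ orientations of $K_{a,b}$ at most $(a+b)!\leq (a+b)^{a+b}$ are acyclic. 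Consequently, the probability that the bipartite subdigraph under consideration is acyclic is at most $(a+b)^{a+b}/2^{ab}$, and any directed cycle therein is a directed cycle of the induced subdigraph.

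A union bound over all admissible $(S,T)$ with $|S|=a,|T|=b$ and $a,b\geq 30\ln n$ then bounds the failure probability by
\[
\sum_{a,b\geq 30\ln n}\binom{n^2}{a}\binom{n^2}{b}\frac{(a+b)^{a+b}}{2^{ab}}\;\leq\;\sum_{a,b\geq 30\ln n}\frac{n^{2(a+b)}(a+b)^{a+b}}{2^{ab}}.
\]
Taking $\log_2$ of a single summand gives $2(a+b)\log_2 n+(a+b)\log_2(a+b)-ab$, and for $a,b\geq 30\ln n$ the $-ab$ term dominates the two positive contributions uniformly over $a,b\leq n^2$, making the whole sum $o(1)$. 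Hence for $n$ large enough a valid $D_n$ exists with positive probability; running the argument independently on $K_2\times G_n$ yields the second orientation, and the small-$n$ case is handled by the vacuousness observation above.

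The only real obstacle is the careful verification of the union-bound estimate throughout the full range $30\ln n\leq a,b\leq n^2$: one must check that the loss from the binomial coefficients $n^{2(a+b)}$ and from the count $(a+b)^{a+b}$ of acyclic orientations is indeed absorbed by $2^{ab}$ simultaneously for all such $a,b$. This is where the constant $30$ earns its keep; everything else is essentially a transcription of the short proof of Lemma~\ref{lem:dense_bipartite}.
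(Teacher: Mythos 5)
Your probabilistic first-moment argument is correct and is essentially the same approach as the paper's: both rest on the observation that $S$ and $T$ (resp.\ $\{1\}\times S$ and $\{2\}\times T$) are completely joined, so the induced subdigraph contains a large complete bipartite graph, and then a union bound over acyclic orientations. The paper, however, simply notes that such a subgraph contains a copy of $K_{\ell,\ell}$ with $\ell=\lceil 30\ln n\rceil\geq 5\log_2(2n^2)$ and invokes Lemma~\ref{lem:dense_bipartite} applied to $G_n$ (resp.~$K_2\times G_n$) of order $n^2$ (resp.~$2n^2$), which spares the explicit double sum over all sizes $a,b$ that you carry out.
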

\begin{proof} Since in $G_n[S\cup T]$ (resp.~in ($K_2\times G_n)[(\{1\}\times S)\cup(\{2\}\times T)]$) all edges between $S$ and $T$ (resp.~between $\{1\}\times S$ and $\{2\}\times T$) are present, the conclusion follows from Lemma~\ref{lem:dense_bipartite}. %Indeed, taking $c_1=\frac{3C_1}{\ln 2}$ suffices, where $C_1>4$ is the constant from the mentioned lemma.
\end{proof}
%{\color{blue}[$c_1=30$, $c'_1=2^13$, $5c_1$ is substituted by $124$]}

We define an $(s_n,t_n)$-collection $\mathcal C_{G_n}$ of $V(G_n)$ as follows. Let $\mathcal L_{G_n}=\{\{i\}\times[n]\mid i\in[n]\}\cup\{[n]\times \{i\}\mid i\in[n]\}$ be the set of 
rows and columns of $V(G_n)$ and 
\[\mathcal Q_{G_n}=\begin{cases}\{A\times B\mid A,B\in\binom{[n]}{\lfloor 124\ln n\rfloor}\} &\text{if } 1\leq\lfloor 124\ln n\rfloor\leq n\\ \{V(G_n)\} &\text{otherwise}.\end{cases}\]
We set $\mathcal C_{G_n}=\{L\cup Q\mid L\in\mathcal L_{G_n},\ Q\in\mathcal Q_{G_n}\}$. Note that $|\mathcal C_{G_n}|\leq s_n:=\max\{1,2n\binom{n}{\lfloor 124\ln n\rfloor}^2\}$ and $|C|\leq t_n:=n+\lfloor 124\ln n\rfloor^2$ for any $C\in\mathcal C_{G_n}$.

\begin{lemma}\label{lem:rook_cover} There is an orientation $D_n$ of $G_n$ such that $\mathcal C_{G_n}$ covers all acyclic partitions of $D_n$.
\end{lemma}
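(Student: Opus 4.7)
The plan is to take $D_n$ to be the orientation of $G_n$ supplied by Lemma~\ref{lem:bipartite_in_rook} and to show that every acyclic subset $X\subseteq V(G_n)$ in $D_n$ is contained in some $C\in\mathcal{C}_{G_n}$; the covering of acyclic partitions follows at once. Write $m=30\ln n$, $\ell=\lfloor 124\ln n\rfloor$, $a=|\pi_1(X)|$, $b=|\pi_2(X)|$, and for each $i\in[n]$ set $R_i=X\cap(\{i\}\times[n])$, $r_i=|R_i|$, $N(i)=\pi_2(R_i)$, with symmetric notation $C_j,c_j,M(j)$ on the column side. The degenerate range $\ell<1$ or $\ell>n$ is immediate because $\mathcal{Q}_{G_n}=\{V(G_n)\}$ there, so I will assume $1\le\ell\le n$; for $n$ large this also gives $\ell\ge 4m$.

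The single workhorse is an immediate consequence of Lemma~\ref{lem:bipartite_in_rook}: whenever $S,T\subseteq X$ satisfy $|S|,|T|\ge m$ and $\pi_i(S)\cap\pi_i(T)=\emptyset$ for $i\in\{1,2\}$, the subdigraph $D_n[S\cup T]$ contains a directed cycle, contradicting the acyclicity of $X$. Using this with $S=R_{i^*}$ whenever $r_{i^*}\ge m$ yields the \emph{peeling} bound
\[
|\{(i,j)\in X:\ i\neq i^*,\ j\notin N(i^*)\}|<m,
\]
so that $\pi_2(X\setminus R_{i^*})$ is confined to $N(i^*)$ together with at most $m-1$ stray columns; the symmetric statement holds for any column $C_{j^*}$ with $c_{j^*}\ge m$.

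If $a\le\ell$ and $b\le\ell$, padding $\pi_1(X),\pi_2(X)$ to sets $A,B$ of size $\ell$ gives $X\subseteq A\times B\in\mathcal{Q}_{G_n}\subseteq\mathcal{C}_{G_n}$. Otherwise, by symmetry assume $a>\ell$. The target is to realise $X\subseteq R_{i^*}\cup(A\times B)$ or $X\subseteq C_{j^*}\cup(A\times B)$ with $|A|,|B|\le\ell$: after removing a single row or column, both remaining projections should be bounded by $\ell$. Let $i^*$ and $j^*$ attain $\max_i r_i$ and $\max_j c_j$. When $r_{i^*}$ sits in the ``absorbing'' window, the peeling bound on $R_{i^*}$ controls $|\pi_2(X\setminus R_{i^*})|$ by $r_{i^*}+m-1$, which is $\le\ell$ thanks to $\ell\ge 4m$; combined with a suitable bound on $|\pi_1(X\setminus R_{i^*})|$ that is available when $a\le\ell+1$, this produces $X\subseteq R_{i^*}\cup(A\times B)$. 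The symmetric argument handles the column case via $C_{j^*}$.

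The main obstacle is the residual case in which $a,b>\ell+1$ and neither $R_{i^*}$ nor $C_{j^*}$ is dominant enough for peeling alone to suffice. In that regime I plan to derive a direct contradiction to acyclicity by partitioning $\pi_1(X)=I_1\sqcup I_2$ and $\pi_2(X)=J_1\sqcup J_2$ into balanced halves and setting $S=X\cap(I_1\times J_1)$, $T=X\cap(I_2\times J_2)$, whose projections are automatically disjoint on both coordinates. A uniformly random balanced split has $\mathbb{E}|S|=\mathbb{E}|T|=|X|/4>m$ (using $|X|\ge\max(a,b)>\ell\ge 4m$); the bounded row- and column-degrees that are forced by the absence of a dominant line should give enough concentration (via Chebyshev or a small alteration) to guarantee $|S|,|T|\ge m$ with positive probability, contradicting the workhorse. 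The specific constant $\ell=\lfloor 124\ln n\rfloor$ is calibrated so that the absorbing regime and the partitioning regime together cover all cases, and the bulk of the technical work will lie in verifying that the intermediate thresholds on $r_{i^*}$ and $c_{j^*}$ interlock so as to leave no gap.
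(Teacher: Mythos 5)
Your high-level plan coincides with the paper's: take the orientation supplied by Lemma~\ref{lem:bipartite_in_rook}, and show that any set $X$ not contained in some $C\in\mathcal C_{G_n}$ contains two subsets $S,T$ with $|S|,|T|\ge 30\ln n$ and disjoint coordinate projections, hence a directed cycle. Your peeling bound (if $r_{i^*}\ge m$ then fewer than $m$ points of $X$ lie off row $i^*$ and off the columns $N(i^*)$) is correct and is the same elementary consequence of Lemma~\ref{lem:bipartite_in_rook} that drives the paper's argument. However, the case analysis you sketch does not close, and this is a genuine gap, not just missing bookkeeping.

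Concretely, the ``absorbing'' argument you describe requires $a\le\ell+1$ (so that removing a single row brings $\pi_1$ down to size $\le\ell$), and the ``residual'' probabilistic argument is declared for $a,b>\ell+1$ \emph{and} no dominant line. This leaves entirely open the configurations with $a>\ell+1$ (or $b>\ell+1$) together with a dominant row or column. The peeling bound does not rescue you there: if $r_{i^*}\ge m$, peeling forces almost all of $X\setminus R_{i^*}$ into columns $N(i^*)$, so these points have $\pi_2\subseteq\pi_2(R_{i^*})$ and cannot be paired against $R_{i^*}$ in Lemma~\ref{lem:bipartite_in_rook}; you get no contradiction from $R_{i^*}$ alone, yet $\pi_1(X\setminus R_{i^*})$ can still have size $a-1>\ell$, so you also cannot cover. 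A second issue is the concentration step in the residual case: with $\ell=\lfloor124\ln n\rfloor$ and $m=30\ln n$ the only available lower bound is $|X|>\ell\approx 4m$, so $\mathbb E|S|=|X|/4$ exceeds $m$ by only $O(\ln n)$ while $\mathrm{Var}(|S|)$ is of order $|X|\cdot\max_i r_i\approx m|X|=\Theta(\ln^2 n)$; Chebyshev then gives a bound far greater than $1$, and Azuma fails for the same reason (single-coordinate changes can move $|S|$ by up to $\max_i r_i$). You acknowledge that ``the bulk of the technical work'' is verifying that the thresholds interlock, but as stated they do not, and the probabilistic step is not quantitatively sound in the critical regime.

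The paper avoids both difficulties by a short deterministic argument: set $S'=S\setminus L$ where $L$ is the largest row or column of $S$. Then $S'$ is not contained in any box $Q\in\mathcal Q_{G_n}$, so one projection of $S'$ exceeds $124\ln n$; say $|\pi_1(S')|>124\ln n$. Letting $L'$ be the largest column of $S'$, a three-way case split on $|L'|$ (large, medium, small) produces two subsets of $S'$ (or one subset of $S'$ paired with a trimmed $L$) with disjoint projections and sizes $\ge 30\ln n$, directly invoking Lemma~\ref{lem:bipartite_in_rook}. Because the argument works inside $S'$, dominant rows and columns of $S$ cause no trouble, and no concentration inequality is needed. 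If you want to keep a probabilistic flavour, you would at minimum need a separate deterministic treatment of the dominant-line cases with $a>\ell+1$, and a second-moment computation that exploits $a,b>\ell$ (forcing $\sum_i r_i(r_i-1)$ and $\sum_j c_j(c_j-1)$ to be small relative to $|X|$) rather than only $\max_i r_i,\max_j c_j\le m$; otherwise the variance bound is too weak.
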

\begin{proof} It can be assumed that $1\leq\lfloor 124\ln n\rfloor\leq n$. 
Let $D_n$ be the orientation from Lemma~\ref{lem:bipartite_in_rook}, and let $S$ be an acyclic set of $D_n$. Assume for a contradiction that $S$ is not contained in any $C\in\mathcal C_{G_n}$. Let $L$ be 
%the set of vertices of $S$ forming its largest row,
the largest row of $S$, or its largest column if it is larger than its largest row, and let $S'=S\setminus L$. Then $S'$ is not contained in any $Q\in\mathcal Q_{G_n}$, so $|\pi_i(S')|>124\ln n>90\ln n+2$ for some $i\in\{1,2\}$.
%It will be enough to show that the set $S'$ resulting from removing $L$ from $S$ is included in some $Q\in\mathcal Q_{G_n}$. Assume for a contradiction that this is not the case. Then $|\pi_i(S')|>124\ln n>90\ln n+2$ for some $i\in\{1,2\}$. 
Assume that $i=1$ (if $i=2$, the argument below is repeated with rows instead of columns). Let $L'$ be the largest column of $S'$. We distinguish three cases. We will show that,
in each case, we can find two sets in $S$ satisfying the hypotheses of Lemma \ref{lem:bipartite_in_rook}. This
will yield a contradiction since $S$ is acyclic.
%{Let $L'$ be 
%the set of vertices forming the largest row of $S'$,
%the largest row of $S'$, or its largest column if that one is larger than its largest row. We distinguish two cases.} {\color{blue}A: When we say row we mean some $i \times n$ or a
%subset of $i \times n$. I think we mean subset, and we should define row/column by saying this. G: Ok}

%\textbf{Case 1.} $|L'|>60\ln n+2$.

%Recall that $|L|\geq |L'|$. Therefore we can find a subset of $L$ and a subset of $L'$ satisfying the hypotheses of Lemma~\ref{lem:bipartite_in_rook}. But then $S$ is not acylic, a contradiction.

%\textbf{Case 2.} $|L'|\leq 60\ln n+2$.

%{\color{blue}A: This is vague. Can't you just say that in this case you are inside some $A \times B$
%without resorting to a contradiction argument by assuming $|\pi_i(S')|>124\ln n>90\ln n+2$ for some $i\in\{1,2\}$}

%In this case we can find two subsets of $S'$ satisfying the hypotheses of Lemma~\ref{lem:bipartite_in_rook}, and we reach the same contradiction.

%{\color{blue}Assume that $i=1$ (if $i=2$, repeat the argument below with rows instead of columns). Let $L''$ be the largest column of $S'$. [I have to revise this proof]}

\textbf{Case 1.} $|L'|>60\ln n+2$.

Recall that $|L|\geq |L'|$. Therefore we can find a subset of $L$ and a subset of $L'$ satisfying the hypotheses of Lemma~\ref{lem:bipartite_in_rook}.

\textbf{Case 2.} $60\ln n+2\geq |L'|\geq 30\ln n$.

Since $|\pi_i(S')|>90\ln n+2$, we can find a subset $T$ of $S'$ such that $T$ and $L'$ satisfy the hypotheses of Lemma~\ref{lem:bipartite_in_rook}.

\textbf{Case 3.} $|L'|\leq 30\ln n$.

Let $\{T_1,...,T_k\}$ be a minimal set of columns of $S'$ satisfying  $|\pi_i(\bigcup^k_{j=1} T_j)|\geq 30\ln n$. By minimality, $|\pi_i(\bigcup^k_{j=1} T_j)|\leq 60\ln n$. Hence, as in Case 2, we can find a subset $T$ of $S'$ such that $T$ and $ \bigcup^k_{j=1} T_j$ satisfy the hypotheses of of Lemma~\ref{lem:bipartite_in_rook}.

In any case, Lemma~\ref{lem:bipartite_in_rook} yields a cycle in $S$, the desired contradiction.
\end{proof}

We are now in position to determine the order of $\vec\chi_{\ell}(KG(n,k))$ when $k$ is bounded by a constant.

\begin{lemma}\label{lem:k_bounded} There is a constant $c\in\mathbb R^+$ such that $\vec{\chi}_\ell(KG(n,k))\geq c\frac{n}{k}\ln\frac{n}{k}$ for every $2\leq k\leq\frac{n}{2}$.
\end{lemma}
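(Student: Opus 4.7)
The strategy is to embed $G_m = K_m\times K_m$ as a subgraph of $KG(n,k)$ with $m=\lfloor n/k\rfloor$, so that $\vec{\chi}_\ell(KG(n,k)) \geq \vec{\chi}_\ell(G_m)$; it then suffices to prove $\vec{\chi}_\ell(G_m) \geq c\,m \ln m$ for some absolute constant $c>0$ and all sufficiently large $m$, since small $m$ can be absorbed into the constant $c$.

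For the embedding, I would pick pairwise disjoint subsets $A = \{a_1,\ldots,a_m\}$, $B = \{b_1,\ldots,b_m\}$ and $R_1,\ldots,R_m$ of $[n]$, each $R_i$ of size $k-2$; this is possible because $|A|+|B|+\sum_{i=1}^m |R_i|=mk\leq n$. Define $\varphi\colon [m]^2 \to \binom{[n]}{k}$ by $\varphi(i,j)=\{a_i,b_j\}\cup R_i$. When $i=i'$ or $j=j'$, the images $\varphi(i,j)$ and $\varphi(i',j')$ share $a_i$ or $b_j$; when $i\neq i'$ and $j\neq j'$, the pairwise disjointness of the sets $A$, $B$, $R_1,\ldots,R_m$ forces $\varphi(i,j)\cap\varphi(i',j')=\emptyset$. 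Hence $\varphi$ realises $G_m$ as a subgraph of $KG(n,k)$, and by monotonicity of $\vec{\chi}_\ell$ under subgraphs the task reduces to lower-bounding $\vec{\chi}_\ell(G_m)$.

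To bound $\vec{\chi}_\ell(G_m)$, I would use the orientation $D_m$ provided by Lemma~\ref{lem:rook_cover}, whose acyclic partitions are all covered by the $(s_m,t_m)$-collection $\mathcal{C}_{G_m}$. Applying Lemma~\ref{lem:BK_general} with $L_1$ the constant list $[u]$ (so $\ell_1=u$), $u=\beta m \ln m$ and $\ell_2=\gamma m \ln m$ for constants $0<\gamma<\beta$ to be tuned, I would aim to show $g(u,\ell_2,m^2,s_m,t_m,u)<1$. Using $t_m=m(1+o(1))$ and $\ln s_m = O(\ln^2 m)$, the exponent $E=4\ell_2 t_m u/((u-\ell_2)m^2)$ is $\frac{4\gamma\beta}{\beta-\gamma}\ln m$ up to lower order, while the threshold to beat is $\log_2\bigl(m^2/(2u\ln s_m)\bigr)=\log_2 m - O(\log\ln m)$. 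Choosing $\beta$ large and $\gamma$ small enough that $4\gamma\beta/(\beta-\gamma)<1/\ln 2$ forces both $g<1$ and the lemma's hypothesis $4t_m u\leq (u-\ell_2)m^2$ to hold for all large $m$; the probabilistic method then yields an $\ell_2$-list assignment of $D_m$ rejecting every acyclic partition, so $\vec{\chi}_\ell(G_m)>\gamma m\ln m$.

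The main technical hurdle is this calibration of constants: the exponential saving in Lemma~\ref{lem:BK_general} must overcome the $\ln^2 m$ factor in $\ln s_m$ and the $O(\log\ln m)$ correction. Once this is in place, combining the embedding with the bound on $\vec{\chi}_\ell(G_m)$ gives $\vec{\chi}_\ell(KG(n,k))\geq c\lfloor n/k\rfloor \ln\lfloor n/k\rfloor\geq c'(n/k)\ln(n/k)$, taking $c'$ small enough to absorb the bounded regime where $n/k$ is close to $2$.
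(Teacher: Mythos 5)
Your proposal is correct and follows essentially the same route as the paper: embed $G_m = K_m \times K_m$ (with $m = \lfloor n/k\rfloor$) into $KG(n,k)$ via a "padding" construction, invoke Lemma~\ref{lem:rook_cover} for an orientation whose acyclic partitions are all covered by $\mathcal C_{G_m}$, and then feed the constant $\ell_1$-list assignment into Lemma~\ref{lem:BK_general} with $\ell_1=u=\Theta(m\ln m)$ and $\ell_2 = \gamma\ell_1$, tuning $\gamma$ so that the exponent $\frac{4\ell_2 t u}{(\ell_1-\ell_2)m^2}\approx \frac{4\gamma\beta}{\beta-\gamma}\ln m$ stays below $\log_2 m$ and the $s^u$ factor (with $\ln s_m = O(\ln^2 m)$) is absorbed. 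The only cosmetic difference is the embedding: the paper uses vertices $\{i\}\cup J_j$ with $|J_j|=k-1$, while you use $\{a_i,b_j\}\cup R_i$ with $|R_i|=k-2$; both realise $G_{\lfloor n/k\rfloor}$ inside $KG(n,k)$ and require the same bound $mk\le n$.
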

\begin{proof} First note that $G_{\lfloor\frac{n}{k}\rfloor}$ is isomorphic to a subgraph of $KG(n,k)$. Indeed, if we take $\left\lfloor\frac{n}{k}\right\rfloor+1$ pairwise disjoint subsets $I,J_1,...,J_{\lfloor\frac{n}{k}\rfloor}\subseteq [n]$ with $|I|=\left\lfloor\frac{n}{k}\right\rfloor$ and $|J_1|=...=|J_{\lfloor\frac{n}{k}\rfloor}|=k-1$, then the set of vertices $S=\{\{i\}\cup J_j\mid i\in I,\ 1\leq j\leq\left\lfloor\frac{n}{k}\right\rfloor\}\subseteq\binom{[n]}{k}$ induces a copy of $G_{\lfloor\frac{n}{k}\rfloor}$. 

Thus it suffices to show that $\vec\chi_{\ell}(G_{\tilde n})\geq c\tilde n\ln\tilde n$ for some $c > 0$. Assume that $\tilde n$ is large enough. Given $G_{\tilde n}$, consider the orientation $D_{\tilde n}$ from Lemma~\ref{lem:rook_cover}; we know that $\mathcal C_{G_{\tilde n}}$ covers all acyclic partitions of $D_{\tilde n}$. Let  $u_{\tilde n}=\ell_{1,\tilde n}=\lfloor\tilde n\ln\tilde n\rfloor$ and $\ell_{2,\tilde n}=\lfloor cu_{\tilde n}\rfloor$, where $c < 1$ is a positive constant to be defined later. Let $L_{1,\tilde n}$ be the canonical $\ell_{1,\tilde n}$-list assignment to $D_{\tilde n}$ (i.e.~$L_{1,\tilde n}(v)=[u_{\tilde n}]$ for every $v\in V(D_{\tilde n})$). 
%By Lemma~\ref{lem:rook_cover}, it will be enough to apply Lemma~\ref{lem:BK_general} with $\mathcal C_{G_{\tilde n}}$ and the trivial $\ell_{1,\tilde n}$-list assignement $L_{1,\tilde n}$ (i.e., $L_{1,\tilde n}(v)=[u_{\tilde n}]$ for every $v\in V(G_{\tilde n})$) {\color{blue}[This probably has to be explained better]}. 
It is clear that $4t_{\tilde n} u_{\tilde n}\leq(\ell_{1,\tilde n}-\ell_{2,\tilde n})\tilde n^2$ and
\[\ln g(\ell_{1,\tilde n},\ell_{2,\tilde n},{\tilde n}^2,s_{\tilde n},t_{\tilde n},u_{\tilde n})\leq 330\tilde n\ln^3 \tilde n-\frac{1}{2}{\tilde n}^{2-\frac{8c}{(1-c)\log_2 e}}<0\]
if $\tilde n$ is large enough and $c$ has been chosen so that  $\frac{8c}{(1-c)\log_2 e}<1$. Now, by Lemma~\ref{lem:BK_general}, $\vec\chi_{\ell}(D_{\tilde n})>\ell_{2,\tilde n}$.
\end{proof}

%As a result of Mohar and Wu suggests \cite{MoharWu2016}, the tensor product of a graph $G$ with a dense graph $H$ can easily achieve a dichromatic number close to $\chi(G)$. We will show that this is also the case when lists are involved. 

The previous lemma handles the case when $k$ is small. For larger values of $k$, we need
to modify the definition of a cover. Let $H=(V,E)$ be a graph, $\mathcal C$ an $(s,t)$-collection of $V$ and $\lambda\in\mathbb R^+$. Consider the graph $K_2\times H$ and one of its orientations $D$. We say that an acyclic partition $P$ of $V(D)$ is \emph{semicovered} by the pair $(\mathcal C,\lambda)$ (or that $(\mathcal C,\lambda)$ is an \emph{$(s,t)$-semicover} of $P$) if for every acyclic set $S=(\{1\}\times S_1)\cup(\{2\}\times S_2)\in P$ either $S_1\subseteq C_1$ and $S_2\subseteq C_2$ for some $C_1,C_2\in\mathcal C$, or $S_i\subseteq C$ and $|S_i|<\lambda$ for some $i\in\{1,2\}$ and some $C\in\mathcal C$.

\begin{lemma}\label{lem:MoharWu_general} Let $G,H$ be graphs. Let $m_G$ be the size of $G$ and $n_H$ the order of $H$. Let $D$ be an orientation of $K_2\times H$ and $(\mathcal C,\lambda)$ an $(s,t)$-semicover of all acyclic partitions of $D$. Let  $\ell_1,\ell_2$ be positive integers such that $8t\ell_1\leq (\ell_1-\ell_2)n_H$, $m_G\, g^2(\ell_1,\ell_2,n_H,s,t,2\ell_1)<1$ and $\lambda\ell_1\leq n_H$. If $\chi_{\ell}(G)>\ell_1$, then $\vec\chi_{\ell}(G\times H)>\ell_2$.
\end{lemma}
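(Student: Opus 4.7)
The plan is to apply the probabilistic method. Since $\chi_\ell(G)>\ell_1$, fix an $\ell_1$-list assignment $L_G$ of $G$ that admits no proper colouring. Orient $G\times H$ by choosing, for each edge $vw\in E(G)$, an orientation of the induced subgraph on $\{v,w\}\times V(H)$ isomorphic to $D$ (arbitrarily identifying $\{v,w\}$ with $\{1,2\}$); call the resulting digraph $\tilde D$. Draw a random $\ell_2$-list assignment $L$ on $V(G\times H)$ by picking, independently and uniformly for every $(v,x)$, a set $L(v,x)\in\binom{L_G(v)}{\ell_2}$. Suppose a proper colouring $c$ of $\tilde D$ is accepted by $L$, and write $c_v=c(v,\cdot)$, $S_v^j=c_v^{-1}(j)$. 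Because $n_H\geq\lambda\ell_1$, pigeonhole forces some $j\in L_G(v)$ with $|S_v^j|\geq\lambda$; let $\phi(v)$ be any such $j$. Since $L_G$ accepts no proper colouring of $G$, the map $\phi:V(G)\to\mathbb Z^+$ has an edge $vw$ with $\phi(v)=\phi(w)=:i^*$, so $i^*$ is ``big on both sides'' of this edge. Let $A_{vw}$ denote the event that an accepted proper colouring of $\tilde D[\{v,w\}\times V(H)]$ carrying such a big-at-both colour exists. A union bound then yields $\Pr(\exists\text{ accepted }c)\leq\sum_{vw\in E(G)}\Pr(A_{vw})$, and it suffices to show $\Pr(A_{vw})\leq g^2(\ell_1,\ell_2,n_H,s,t,2\ell_1)$.

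To bound $\Pr(A_{vw})$ I factor the event through the two sides. Define $B_v$ to be the event that $L(v,\cdot)$ accepts some partition of $V(H)$ into $2\ell_1$ parts covered by $\mathcal C$ (indexed by the padded palette $L_G(v)\cup L_G(w)$), and $B_w$ analogously. Applying Lemma~\ref{lem:BK_general} with $G\leftarrow H$, $n\leftarrow n_H$, $u=2\ell_1$, $L_1\equiv L_G(v)$ and $L_2(x)=L(v,x)$, the arithmetic hypothesis $4tu\leq(\ell_1-\ell_2)n$ becomes exactly $8t\ell_1\leq(\ell_1-\ell_2)n_H$, which is assumed. Hence $\Pr(B_v)\leq g(\ell_1,\ell_2,n_H,s,t,2\ell_1)=:g$ and, by symmetry, $\Pr(B_w)\leq g$. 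Since $L(v,\cdot)$ and $L(w,\cdot)$ are independent, $\Pr(B_v\cap B_w)\leq g^2$. Assuming the containment $A_{vw}\subseteq B_v\cap B_w$, I obtain $\Pr(A_{vw})\leq g^2$ and summing over the $m_G$ edges gives $\Pr(\exists\text{ accepted }c)\leq m_Gg^2<1$, so a good realisation of $L$ exists and $\vec\chi_\ell(G\times H)>\ell_2$.

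The delicate step is establishing $A_{vw}\subseteq B_v\cap B_w$. The natural $v$-side partition $(S_v^j)_{j\in L_G(v)\cup L_G(w)}$ is clearly accepted by $L(v,\cdot)$; to certify that it is covered by $\mathcal C$, one uses that the joint acyclic partition of $\{1,2\}\times V(H)$ is semicovered by $(\mathcal C,\lambda)$. Case~(a) and case~(b) with $i=1$ both exhibit a $C\in\mathcal C$ containing $S_v^j$; the awkward configuration is a part validated only by case~(b) with $i=2$, in which the $v$-side is formally uncovered. The plan is to exploit the anchor provided by the big-at-both colour $i^*$ together with the inequality $\lambda\ell_1\leq n_H$ to relabel the exceptional $v$-side vertices within the combined palette $L_G(v)\cup L_G(w)$ while respecting the random lists $L(v,\cdot)$, yielding a modified partition that is simultaneously accepted and covered; a symmetric argument handles the $w$-side. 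Managing this semicover-to-cover reshuffle, and checking that the at most $2\ell_1$ parts and the $\mathcal C$-cover sizes are preserved throughout, is the main technical obstacle of the proof.
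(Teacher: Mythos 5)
There is a genuine gap, and it is the one you flag yourself: the containment $A_{vw}\subseteq B_v\cap B_w$ does not hold, and the ``semicover-to-cover reshuffle'' you gesture at cannot be carried out. The definition of a semicover is fundamentally asymmetric. For a colour class $j\neq i^*$, the semicover may be witnessed only by option~(ii) with the $w$-side: $S_w^j\subseteq C$ and $|S_w^j|<\lambda$. In that case $S_v^j$ is completely unconstrained --- it can be large \emph{and} contained in no member of $\mathcal C$. Then the natural $v$-side partition is not covered, so $B_v$ fails even though $A_{vw}$ holds (the big-at-both colour $i^*$ only forces option~(i) for the $i^*$-part; it says nothing about the other parts). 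The repair you propose --- relabelling the offending $v$-side vertices ``within the combined palette $L_G(v)\cup L_G(w)$'' --- cannot respect acceptance: the random lists satisfy $L(v,x)\subseteq L_G(v)$, so you cannot re-colour any $v$-side vertex with a colour from $L_G(w)\setminus L_G(v)$, and re-colouring it with $i^*$ or with any other colour of $L_G(v)$ fails unless that colour happens to lie in $L(v,x)$, which you cannot control.

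The paper avoids this obstruction entirely by not trying to make either side's partition covered. Instead, after fixing a good list assignment $L'_2$ (one that rejects all partitions of each $\{v,w\}\times V(H)$ covered by the \emph{product} collection $\mathcal C_{\{v,w\}}$, via a single application of Lemma~\ref{lem:BK_general} on $2n_H$ vertices with parameters $(s^2,2t)$), it assumes an accepted proper colouring $f'_2$ of $G\times H$ exists and builds a proper colouring $f_1$ of $G$ as follows: set $f_1(v)=\gamma_v$ if the $v$-side of $f'_2$ has an uncovered colour class, and otherwise set $f_1(v)=\gamma_v^+$, a colour whose $v$-side class is largest (hence $\geq n_H/\ell_1\geq\lambda$ by pigeonhole --- this is exactly where $\lambda\ell_1\leq n_H$ enters). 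If $f_1(v)=f_1(w)$, the rejection property gives at least one uncovered side for that colour; combining the cases ``uncovered on both sides'' and ``uncovered on one side, $\geq\lambda$ on the other'' with the three branches of the semicover yields a contradiction. Your ``big-at-both'' anchor is a special case of this dichotomy but does not cover the mixed situations, which is precisely what makes the argument go. As stated your proposal is unfinished and, in my view, unrescuable along the proposed route; the correct fix is the two-case definition of $f_1$ (or an equivalent asymmetric bookkeeping), not a reshuffle of colour classes.

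Two smaller remarks. First, your per-side application of Lemma~\ref{lem:BK_general} with $(n_H,s,t,u=2\ell_1)$ and the paper's one-shot application with $(2n_H,s^2,2t,u=2\ell_1)$ both land on the same numerical bound $g^2(\ell_1,\ell_2,n_H,s,t,2\ell_1)$, and your verification that $8t\ell_1\leq(\ell_1-\ell_2)n_H$ translates to the lemma's hypothesis is correct --- so that part of the accounting is fine. Second, your framing of the contrapositive (fix a bad $\ell_1$-list $L_G$; show some $\ell_2$-list assignment admits no accepted proper colouring of $G\times H$) is logically equivalent to the paper's, so the high-level shape is sound; the defect is purely in the step that converts semicoverage of the joint partition into coverage of each marginal partition.
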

\begin{proof} Suppose that $\vec\chi_{\ell}(G\times H)\leq \ell_2$. Let $L_1$ be any $\ell_1$-list assignment for $G$. Consider a random $\ell_2$-list assignment $L_2$ for $G\times H$, where for each $v\in V(G)$ and each $x\in\{v\}\times V(H)$ $L_2(x)$ is chosen independently and equiprobably among $\binom{L_1(v)}{\ell_2}$. For each edge $\{v,w\}$ of $G$, let $\mathcal C_{\{v,w\}}=\{(\{v\}\times C_1)\cup(\{w\}\times C_2)\mid C_1,C_2\in\mathcal C\}$. We orient the subgraph induced by $\{v,w\}\times V(H)$ according to $D$ (in any of the two possible ways). This results in an orientation of $G\times H$ that we will call $\overrightarrow{G\times H}$. By applying Lemma~\ref{lem:BK_general} to $(G\times H)[\{v,w\}\times V(H)]$ with a palette of size $u=2\ell_1$, we see that the probability that ${L_2}\restrict{\{v,w\}\times V(H)}$ accepts some partition covered by $\mathcal C_{\{v,w\}}$ is smaller than $g(\ell_1,\ell_2,2n_H,s^2,2t,2\ell_1)=g^2(\ell_1,\ell_2,n_H,s,t,2\ell_1)$. 
Therefore, the probability that this happens for some edge $\{v,w\}$ of $G$ is less than $m_G\, g^2(\ell_1,\ell_2,n_H,s,t,2\ell_1)< 1$. Thus we can find a $\ell_2$-list assignment $L'_2$ for $G\times H$ such that, for every $\{v,w\}\in E(G)$, ${L'_2}\restrict{\{v,w\}\times V(H)}$ rejects all partitions of $\{v,w\}\times V(H)$ covered by $\mathcal C_{\{v,w\}}$.

Since $\vec\chi_{\ell}(G\times H)\leq\ell_2$, $\overrightarrow{G\times H}$ has a colouring $f'_2$ which is accepted by $L'_2$ and produces no monochromatic cycles. Let us define a colouring $f_1$ for $G$ as
\[f_1(v)=\begin{cases}\gamma_v\!\!\! & \text{if }\exists\gamma\ \, \forall C\in\mathcal C\ \ {(f'_2)}^{-1}(\gamma)\cap(\{v\}\times V(H))\nsubseteq\{v\}\times C \text{, where $\gamma_v$ is any such $\gamma$} \\ \gamma^+_v\!\!\! & \text{otherwise, where }\gamma^+_v \text{ is any }\gamma\text{ maximizing } |{(f'_2)}^{-1}(\gamma)\cap(\{v\}\times V(H))|.\end{cases}\] 
Note that $f_1(v) \in L_1(v)$, for each $v \in V(G)$. We will show that $f_1(v)$ is a proper colouring of $G$, and this contradiction will finish the proof.

Let $\{v,w\}$ be an edge of $G$, and suppose for a contradiction that $f_1(v)=f_1(w)$. Since 
%{\color{blue}[}$f'_2$ is accepted by $L'_2${\color{blue}]$\rightarrow$[}
${L'_2}\restrict{\{v,w\}\times V(H)}$ rejects all partitions of $\{v,w\}\times V(H)$ covered by $\mathcal C_{\{v,w\}}$,
%{\color{blue}]}
either $f_1(v)=\gamma_v$ or $f_1(w)=\gamma_w$.
%{\color{blue}A: Don't you rather mean that $L_2'$ rejects all partition etc. G: Yes, it seems better}
Without loss of generality, we can assume that $f_1(w)=\gamma_w$. Since $f'_2$ produces no monochromatic cycles, if $f_1(v)=\gamma_v$ then $(\mathcal C,\lambda)$ does not semicover all acyclic partitions of $D$, a contradiction. On the other hand, if $f_1(v)=\gamma_v^+$ then $|{(f'_2)}^{-1}(\gamma_v^+)\cap(\{v\}\times V(H))|\geq\frac{n_H}{\ell_1}\geq\lambda$, also contradicting that $(\mathcal C,\lambda)$ semicovers all acyclic partitions of $D$.

Therefore, we have found a proper colouring $f_1$ accepted by the $\ell_1$-list assignment $L_1$. Since $L_1$ was arbitrary we conclude that $\chi_{\ell}(G)\leq\ell_1$, ending the proof.
\end{proof}

\begin{lemma}\label{lem:rook_cover2} For every $n$ there is an orientation $D_n$ of $K_2\times G_n$ such that all acyclic partitions of $D_n$ are semicovered by $(\mathcal C_{G_n},2^{13}\ln^2 n)$.
\end{lemma}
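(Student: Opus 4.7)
Take $D_n$ to be the orientation of $K_2\times G_n$ furnished by Lemma~\ref{lem:bipartite_in_rook}. I may assume $n$ is large, since for small $n$ we have $\mathcal Q_{G_n}=\{V(G_n)\}$ and the semicover condition is immediate. Fix any acyclic set $S=(\{1\}\times S_1)\cup(\{2\}\times S_2)$ of $D_n$. If both $S_1$ and $S_2$ are individually contained in some element of $\mathcal C_{G_n}$, the first clause of the semicover is met. Otherwise, by the $\mathbb Z_2$-symmetry of $K_2\times G_n$ I may assume that $S_1$ is not contained in any $C\in\mathcal C_{G_n}$; I will show that then $S_2\subseteq C'$ for some $C'\in\mathcal C_{G_n}$ and $|S_2|<2^{13}\ln^2 n$.

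The heart of the argument is the following combinatorial claim: one can extract $S'_1,S'_2\subseteq S_1$, each of size at least $30\ln n$, such that their row projections $R'_i:=\pi_1(S'_i)$ and column projections $C'_i:=\pi_2(S'_i)$ satisfy $R'_1\cap R'_2=\emptyset$ and $C'_1\cap C'_2=\emptyset$. The proof mirrors Lemma~\ref{lem:rook_cover}: let $L_0$ be the largest row or column of $S_1$. If $|L_0|\geq 60\ln n$, take $S'_1$ of size $30\ln n$ inside $L_0$; since $S_1$ is in no $L\cup Q\in\mathcal C_{G_n}$, $S_1\setminus L_0$ cannot be contained in any $Q\in\mathcal Q_{G_n}$, whence $|\pi_1(S_1\setminus L_0)|>\lfloor 124\ln n\rfloor$ or $|\pi_2(S_1\setminus L_0)|>\lfloor 124\ln n\rfloor$, and one can pick $S'_2$ inside $S_1\setminus L_0$ by carefully choosing its rows or columns to keep both projection directions disjoint from those of $S'_1$. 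If every row and every column of $S_1$ has size less than $60\ln n$, a K\"onig-type argument on the bipartite graph representing $S_1$ forces a matching of size at least $60\ln n$ (otherwise a vertex cover of size less than $60\ln n$, combined with the small line sizes, would squeeze $S_1$ into some $L\cup Q$), and splitting the matching into two halves yields the two subsets.

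Granting the claim, suppose for some $i$ that $|S_2\cap(([n]\setminus R'_i)\times([n]\setminus C'_i))|\geq 30\ln n$. Then $30\ln n$ of those points paired with $S'_i$ meet the hypotheses of Lemma~\ref{lem:bipartite_in_rook}, producing a directed cycle in $S$ and contradicting acyclicity. Hence, for each $i\in\{1,2\}$,
\[|S_2\cap(([n]\setminus R'_i)\times([n]\setminus C'_i))|<30\ln n.\]
Because $R'_1\cap R'_2=\emptyset$ and $C'_1\cap C'_2=\emptyset$, a point $(x,y)$ lying in both $(R'_1\times[n])\cup([n]\times C'_1)$ and $(R'_2\times[n])\cup([n]\times C'_2)$ must lie in $(R'_1\times C'_2)\cup(R'_2\times C'_1)\subseteq(R'_1\cup R'_2)\times(C'_1\cup C'_2)$. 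A union bound on the two exceptional regions then gives that $S_2$ is contained in $(R'_1\cup R'_2)\times(C'_1\cup C'_2)$ up to fewer than $60\ln n$ extra points. Therefore $|\pi_1(S_2)|,|\pi_2(S_2)|\leq 120\ln n<\lfloor 124\ln n\rfloor$ for $n$ large, so $S_2\in\mathcal Q_{G_n}\subseteq\mathcal C_{G_n}$, and $|S_2|\leq 2(30\ln n)^2+60\ln n<2^{13}\ln^2 n$.

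The main obstacle is the combinatorial claim, especially in the regime where every row and every column of $S_1$ is small: there one must convert the global non-containment $S_1\nsubseteq L\cup Q$ into a sufficient matching lower bound in the bipartite view of $S_1$, while ensuring disjointness of row and column projections simultaneously; this is more delicate than the analogous analysis in Lemma~\ref{lem:rook_cover}, and is where the large absolute constants in $\mathcal C_{G_n}$ and in $\lambda=2^{13}\ln^2 n$ are consumed.
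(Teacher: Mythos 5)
Your high-level plan is right and close to the paper's: take $D_n$ from Lemma~\ref{lem:bipartite_in_rook}, extract from $S_1$ two sets of size roughly $30\ln n$ with disjoint row projections and disjoint column projections, then use them to force $S_2$ to be small with small projections. Your accounting via the two ``exceptional regions'' $([n]\setminus R'_i)\times([n]\setminus C'_i)$ is a valid (and slightly different) way to close the argument compared with the paper's set $T=\{(j_1,j_2)\in S_2\mid j_1\notin\pi_1(L_1\cup L'_1)\ \text{or}\ j_2\notin\pi_2(L_1\cup L'_1)\}$ and its split into $T_1,T'_1$; both give the same conclusion, and your version has the advantage of being fully quantitative about $|S_2|$ and its projections.

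The gap is in the justification of your ``combinatorial claim.'' First, this claim is \emph{not} a new, more delicate fact: it is exactly the existence of $L_1,L_1'\subseteq S_1$ satisfying the hypotheses of Lemma~\ref{lem:bipartite_in_rook}, which the paper already proves inside Lemma~\ref{lem:rook_cover} (indeed the paper's proof of Lemma~\ref{lem:rook_cover2} simply says ``as in the proof of Lemma~\ref{lem:rook_cover}''). Second, your sketch of how to prove it in the case where every row and column of $S_1$ has size $<60\ln n$ does not work. A vertex cover of $S_1$ by fewer than $60\ln n$ lines, even when each line of $S_1$ contains $<60\ln n$ points, does not place $S_1$ inside some $L\cup Q$ with $Q\in\mathcal Q_{G_n}$: the rows in the cover can altogether have a column projection of size up to about $(60\ln n)^2$, and symmetrically for the columns in the cover, which vastly exceeds $\lfloor 124\ln n\rfloor$. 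So the contrapositive ``matching $<60\ln n$ implies $S_1\subseteq L\cup Q$'' is false, and K\"onig's theorem does not give the matching you want. The paper avoids K\"onig altogether and in the small-$L'$ regime builds the second set by taking a minimal union of columns of $S'$ whose row projection reaches $30\ln n$ (Case~3 of Lemma~\ref{lem:rook_cover}), then picking one point in each of $30\ln n$ rows of $S'$ outside that projection; minimality keeps the block's projections between $30\ln n$ and $60\ln n$, and the disjointness of the second set's projections is automatic. You should either invoke Lemma~\ref{lem:rook_cover} directly for this extraction step, or reproduce that column-grouping argument; the K\"onig shortcut as stated has a real hole.
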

\begin{proof} The proof is similar to that of Lemma~\ref{lem:rook_cover}. Let $D_n$ be the orientation of $K_2\times G_n$ from Lemma~\ref{lem:bipartite_in_rook}. Assume that $1\leq \lfloor 124\ln n\rfloor\leq n$; otherwise the lemma is trivial. Let $S=(\{1\}\times S_1)\cup(\{2\}\times S_2)$ be an acyclic set in $D_n$. Assume that $S_1$ is not contained in any $C\in\mathcal C_{G_n}$. As in the proof of Lemma~\ref{lem:rook_cover} we can find in $S_1$ two sets $L_1,L_1'\subseteq S_1$ satisfying the hypotheses of Lemma~\ref{lem:bipartite_in_rook}. We can assume that $|L_1|=|L'_1|=\lceil 30\ln n\rceil$. 

We argue by contradiction. Suppose that $|S_2|\geq 2^{13}\ln^2 n$ or that $S_2$ is not contained in any $C\in\mathcal C_{G_n}$. Let $T=\{(j_1,j_2)\in S_2\mid j_1\notin\pi_1(L_1\cup L'_1)\text{ or }j_2\notin\pi_2(L_1\cup L'_1)\}$. We claim that $|T|\geq 60\ln n$. Let us consider two cases.

\textbf{Case 1.} $|S_2|\geq 2^{13}\ln^2 n$.

We have that $|\pi_1(L_1\cup L'_1)|,|\pi_2(L_1\cup L'_1)|\leq 2\lceil 30\ln n\rceil\leq 64\ln n$. Therefore $|T|\geq |S_2|-64^2\ln^2 n\geq 60\ln n$.

\textbf{Case 2.} $S_2\nsubseteq C$ for any $C\in\mathcal C_{G_n}$.

In this case, $S_2\nsubseteq Q$ for any $Q\in\mathcal Q_{G_n}$. Therefore, $|\pi_i(S_2)|>124\ln n$ for some $i\in\{1,2\}$. Since $|\pi_i(L_1\cup L'_1)|\leq 2\lceil 30\ln n\rceil$, we have that $|T|\geq 124\ln n-2\lceil 30\ln n\rceil\geq 60\ln n$.

%Hence $|T|\geq 60\ln n$ in any case. Let $T_1=\{(j_1,j_2)\in T\mid j_1\notin\pi_1(L_1),\,j_2\notin\pi_2(L_1)\}$ {\color{blue} and $T'_1=T\setminus T_1$}. If $|T_1|\geq 30\ln n$, we apply Lemma~\ref{lem:bipartite_in_rook} with $L_1$ and $T_1$ and we find that $(\{1\}\times L_1)\cup(\{2\}\times T_1)$ induces a directed cycle, a contradiction. Otherwise, $|T\setminus T_1|\geq 30\ln n$. Observe that $\pi_i(T\setminus T_1)\cap\pi_i(L'_1)=\emptyset$ for $i\in\{1,2\}$
%{\color{blue}A: expand a little}. Hence, by Lemma~\ref{lem:bipartite_in_rook}, $(\{1\}\times L'_1)\cup(\{2\}\times(T\setminus T_1))$ induces a directed cycle.

Hence $|T|\geq 60\ln n$ in any case. Let $T_1=\{(j_1,j_2)\in T\mid j_1\notin\pi_1(L_1),\,j_2\notin\pi_2(L_1)\}$ and $T'_1=T\setminus T_1$. Note that $T'_1=\{(j_1,j_2)\in T\mid j_1\notin\pi_1(L'_1),\,j_2\notin\pi_2(L'_1)\}$ by the definition of $T$. Applying Lemma~\ref{lem:bipartite_in_rook} yields the desired contradiction. Indeed, if $|T_1|\geq 30\ln n$, then $(\{1\}\times L_1)\cup(\{2\}\times T_1)$ has a directed cycle, and if otherwise $|T'_1|\geq 30\ln n$, then $(\{1\}\times L'_1)\cup(\{2\}\times T'_1)$ has a directed cycle.
%{\color{gray} If $|T_1|\geq 30\ln n$, we apply Lemma~\ref{lem:bipartite_in_rook} with $L_1$ and $T_1$ and we find that $(\{1\}\times L_1)\cup(\{2\}\times T_1)$ induces a directed cycle, a contradiction. Otherwise, $|T'_1|\geq 30\ln n$. Observe that $\pi_i(T\setminus T_1)\cap\pi_i(L'_1)=\emptyset$ for $i\in\{1,2\}$}
%{\color{blue}A: expand a little. G: Ok. Now it's better, but it may still not be enough.} {\color{gray}Hence, by Lemma~\ref{lem:bipartite_in_rook}, $(\{1\}\times L'_1)\cup(\{2\}\times(T\setminus T_1))$ induces a directed cycle.}
\end{proof}

\begin{proof}[Proof of Theorem \ref{thm:list_dichromatic_Kneser}] We fix $\varepsilon$ and assume that $n$ is large enough (to deal with the case of $n$ small, we just make sure that $c_{\varepsilon}$ is small enough). If $k$ is bounded by a constant, then Lemma~\ref{lem:k_bounded} does the job. Therefore we can assume that $k\geq 4$. Note that $KG(\lfloor\frac{n}{2}\rfloor,k-2)\times G_{\lfloor\frac{n}{4}\rfloor}$ is a subgraph of $KG(n,k)$. Indeed, consider, for any positive integers $n_1,n_2,k_1,k_2$ satisfying $2k_1\leq n_1$, $2k_2\leq n_2$, $n_1+n_2\leq n$ and $k_1+k_2=k$, the set of vertices in $KG(n,k)$ of the form \[S=\left\{\{i_1,...,i_{k_1},j_1,...,j_{k_2}\}\ \left|\  \{i_1,...,i_{k_1}\}\in\binom{[n_1]}{k_1},\ \{j_1,...,j_{k_2}\}\in\binom{n_1+[n_2]}{k_2}\right.\right\};\] the subgraph induced by $S$ is isomorphic to $KG(n_1,k_1)\times KG(n_2,k_2)$. By the proof of Lemma~\ref{lem:k_bounded}, if $k_2\geq 2$ then $G_{\lfloor\frac{n_2}{k_2}\rfloor}$ is isomorphic to a subgraph of $KG(n_2,k_2)$, so we can just take $n_1=\lfloor\frac{n}{2}\rfloor$, $n_2=\lceil\frac{n}{2}\rceil$, $k_1=k-2$ and $k_2=2$.

Note that $KG(\lfloor\frac{n}{2}\rfloor,k-2)$ has $m_n=\frac{1}{2}\binom{\lfloor\frac{n}{2}\rfloor}{k-2}\binom{\lfloor\frac{n}{2}\rfloor-k+2}{k-2}$ edges. Provided that $n$ is sufficiently large, we can find an $\varepsilon'\in(0,1)$ such that $n^{\frac{1}{2}-\varepsilon}\leq {\lfloor\frac{n}{2}\rfloor}^{\frac{1}{2}-\varepsilon'}$. Let $c_{\varepsilon'}$ be the corresponding constant from Theorem~\ref{thm:BK2}; let $c'_{\varepsilon}\in(0,1)$ be a constant for now unspecified, $\ell_{1,n}=\lfloor c_{\varepsilon'}\lfloor\frac{n}{2}\rfloor\ln\lfloor\frac{n}{2}\rfloor\rfloor-1$ and $\ell_{2,n}=\lfloor c'_{\varepsilon}\ell_{1,n}\rfloor$. We apply Lemma~\ref{lem:MoharWu_general} with the $(s_{\lfloor\frac{n}{4}\rfloor},t_{\lfloor\frac{n}{4}\rfloor})$-semicover $(\mathcal C_{G_{\lfloor\frac{n}{4}\rfloor}},2^{13}\ln^2\lfloor\frac{n}{4}\rfloor)$ of the family of acyclic partitions of $D_{\lfloor\frac{n}{4}\rfloor}$, the orientation of $K_2\times G_{\lfloor\frac{n}{4}\rfloor}$ from Lemma~\ref{lem:rook_cover2}. Clearly 
\[8t_{\lfloor\frac{n}{4}\rfloor}\ell_{1,n}\leq (\ell_{1,n}-\ell_{2,n})\left\lfloor\frac{n}{4}\right\rfloor^2,\] 
\[2^{13}\ln^2\left\lfloor\frac{n}{4}\right\rfloor\, \ell_{1,n}\leq \left\lfloor\frac{n}{4}\right\rfloor^2\text{ and}\]
\[\ln m_n+2\ln g(\ell_{1,n},\ell_{2,n},\left\lfloor\frac{n}{4}\right\rfloor^2,s_{\lfloor\frac{n}{4}\rfloor},t_{\lfloor\frac{n}{4}\rfloor},2\ell_{1,n})\leq 2k\ln n+660c_{\varepsilon'} n\ln^3 n-\frac{1}{25}n^{2-\frac{50c_{\varepsilon'}c'_{\varepsilon}}{(1-c'_{\varepsilon})\log_2 e}}< 0\]
if $n$ is large enough and $c'_{\varepsilon}$ has been chosen so that $\frac{50c_{\varepsilon'}c'_{\varepsilon}}{(1-c'_{\varepsilon})\log_2 e}<1$. Thus Lemma~\ref{lem:MoharWu_general} and Theorem~\ref{thm:BK2} imply that $\vec\chi_{\ell}(KG(\lfloor\frac{n}{2}\rfloor,k-2)\times G_{\lfloor\frac{n}{4}\rfloor})>\ell_{2,n}$ if $n$ is large enough.
\end{proof}

\section{Sabidussi's theorem}\label{sec:sabidussi}

Tensor products are one of the leitmotifs of Section~\ref{sec:lists}. Let us now take a look at another type of graph product. Let $G,H$ be graphs (resp.~digraphs). The \emph{Cartesian product} of $G$ and $H$ is the graph  (resp.~digraph) $G\mathbin\square H$ with vertex set $V(G)\times V(H)$ where there is an edge between $(u,x)$ and $(v,y)$ (resp.~an arc from $(u,x)$ to $(v,y))$ if and only if either $u=v$ and $\{x,y\}\in E(H)$ (resp.~and $(x,y)\in A(H)$), or $x=y$ and $\{u,v\}\in E(G)$ (resp.~and $(u,v)\in A(G)$). A well-known theorem of Sabidussi \cite{Sabidussi1957} states that for any two graphs $G$ and $H$ the chromatic number of its Cartesian product is $\chi(G\mathbin\square H)=\max\{\chi(G),\chi(H)\}$. His proof can be adapted to show an analogous result for digraphs. 
%Let $G,H$ be digraphs. We define its Cartesian product $G\square H$ as the digraph with vertex set $V(G)\times V(H)$ and where there is an arc from $(g_1,h_1)$ to $(g_2,h_2)$ if and only if either $g_1=g_2$ and $(h_1,h_2)$ is an arc of $H$, or $h_1=h_2$ and $(g_1,g_2)$ is an arc of $G$. 

\begin{teo}\label{thm:sabidussi} Let $G$ and $H$ be digraphs. Then $\vec\chi(G\mathbin\square H)=\max\{\vec\chi(G),\vec\chi(H)\}$.
\end{teo}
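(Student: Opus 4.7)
My plan is to follow the classical approach of Sabidussi, adapted to the directed setting, essentially reducing the question to an analysis of monochromatic closed walks in the two factors.

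For the lower bound, the observation is that both $G$ and $H$ embed as induced subdigraphs of $G\mathbin\square H$: for any fixed $x\in V(H)$, the set $V(G)\times\{x\}$ induces a copy of $G$ in $G\mathbin\square H$, and symmetrically for $H$. Any acyclic colouring of $G\mathbin\square H$ restricts to an acyclic colouring of each fiber, so $\vec\chi(G\mathbin\square H)\geq\max\{\vec\chi(G),\vec\chi(H)\}$.

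For the upper bound, set $k=\max\{\vec\chi(G),\vec\chi(H)\}$ and fix proper $k$-colourings $f_G\colon V(G)\to[k]$ and $f_H\colon V(H)\to[k]$. The natural candidate is the colouring $f(u,x)=f_G(u)+f_H(x)\pmod k$. To show $f$ is proper, suppose toward contradiction that there is a monochromatic directed cycle $(u_0,x_0)\to(u_1,x_1)\to\cdots\to(u_n,x_n)=(u_0,x_0)$, all of colour $c$. By definition of $G\mathbin\square H$, every arc is either \emph{horizontal} (then $u_i=u_{i+1}$ and $(x_i,x_{i+1})\in A(H)$) or \emph{vertical} (then $x_i=x_{i+1}$ and $(u_i,u_{i+1})\in A(G)$). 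The key remark is that monochromaticity in $f$ propagates to both factors: on a horizontal arc the $G$-coordinates are equal, while on a vertical arc $x_i=x_{i+1}$ forces $f_G(u_i)=f_G(u_{i+1})$; hence all $u_i$ share the same $f_G$-colour, and symmetrically all $x_i$ share the same $f_H$-colour.

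The contradiction now splits into two cases. If every arc of the cycle is horizontal, then $u_0=u_1=\cdots=u_n$ and $x_0\to x_1\to\cdots\to x_n=x_0$ is a directed closed walk in $H$ of positive length, all of whose vertices share the same $f_H$-colour; this walk contains a directed cycle in a single $f_H$-colour class, contradicting the properness of $f_H$. Otherwise, at least one arc of the cycle is vertical, so $u_0\to u_1\to\cdots\to u_n=u_0$ (after deleting the trivial steps coming from horizontal arcs) is a directed closed walk in $G$ of positive length whose vertices all share the same $f_G$-colour, producing a directed cycle in a single $f_G$-colour class and contradicting the properness of $f_G$. The only delicate point, and the step to handle with care, is precisely this bookkeeping of directions, since in the digraph setting a proper colour class need not be edgeless but merely acyclic, so the argument must go through closed walks rather than single edges; once this is set up, both cases close cleanly and we conclude $\vec\chi(G\mathbin\square H)\leq k$.
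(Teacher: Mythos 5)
Your proof is correct and follows essentially the same approach as the paper: the same mod-$k$ sum colouring $f(u,x)=f_G(u)+f_H(x)\bmod k$, followed by showing that a monochromatic cycle in $G\mathbin\square H$ would project to a monochromatic closed walk of positive length in $G$ or in $H$, hence to a monochromatic cycle there. Your explicit horizontal/vertical arc dichotomy and your attention to the closed-walk-versus-cycle distinction are, if anything, slightly more careful than the paper's phrasing, but the underlying argument is identical.
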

\begin{proof} Let $N=\max\{\vec\chi(G),\vec\chi(H)\}$. Note that both $G$ and $H$ are isomorphic to a subdigraph of $G\mathbin\square H$. Therefore, $\vec\chi(G\mathbin\square H)\geq N$.

Now, let $f_G,f_H$ be $N$-colourings of $G,H$. Let $f:V(G\mathbin\square H)\rightarrow [N]$ be the $N$-colouring of $G\mathbin\square H$ defined by $f(g,h)\equiv f_G(g)+f_H(h)\mod N$.
We claim that $f$ is a proper colouring of $G\mathbin\square H$. We argue by contradiction. Let $(g_1,h_1),...,$ $(g_1,h_{s_1}),(g_2,h_{s_1+1}),...,(g_2,h_{s_2}),...,$ $(g_r,h_{s_{r-1}+1}),...,(g_r,h_{s_r})$ be the successive vertices of a monochromatic cycle, where $g_i\neq g_{i+1}$ and $h_{s_i}=h_{s_i+1}$ for $i\in[r-1]$, and $g_{r}\neq g_1$ and $h_{s_r}=h_1$. The fact that $f$ is constant on these vertices implies that $f_H(h_1)=...=f_H(h_{s_1})=f_H(h_{s_1+1})=...=f_H(h_{s_r})$. If not all of $h_1,...,h_{s_r}$ are identical, we have found a monochromatic cycle in $H$. If they are all identical, then $r\geq 2$, and we can similarly see that $f_G(g_1)=...=f_G(g_r)$, yielding a monochromatic cycle in $G$. Therefore, $\vec\chi(G\mathbin\square H)\leq N$.
\end{proof}

Hedetniemi's conjecture proposes a similar statement for tensor products: given any two graphs $G$ and $H$, the chromatic number of its tensor product is (conjectured to be) $\chi(G\times H)=\min\{\chi(G),\chi(H)\}$. That was refuted by Shitov \cite{Shitov2019}; at the time, the conjecture had been standing for more than five decades. Before the first counterexamples were known, its directed version was formulated. Given two digraphs $G$ and $H$, its tensor product $G\times H$ is defined to be the digraph with vertex set $V(G)\times V(H)$ where there is an arc from $(g_1,h_1)$ to $(g_2,h_2)$ if and only if $(g_1,g_2)$ is an arc of $G$ and $(h_1,h_2)$ is an arc of $H$.
%Another relevant topic to discuss is Hedetniemi's conjecture. Recall that it states that, for any two graphs $G$ and $H$, the chromatic number of its tensor product is $\chi(G\times H)=\min\{\chi(G),\chi(H)\}$. After resisting for more than fifty years, it was finally refuted by Shitov \cite{Shitov2019}. Before the first counterexamples were known, its directed version was formulated. Given two digraphs $G$ and $H$, its tensor product $G\times H$ is defined to be the digraph with vertex set $V(G)\times V(H)$ and where there is an arc from $(g_1,h_1)$ to $(g_2,h_2)$ if and only if $(g_1,g_2)$ is an arc of $G$ and $(h_1,h_2)$ is an arc of $H$.
\begin{conjecture} \label{conj:directed_Hedetniemi}\emph{\cite{Harutyunyan2011}} Let $G$ and $H$ be digraphs. Then $\vec\chi(G\times H)=\min\{\vec\chi(G),\vec\chi(H)\}$.
\end{conjecture}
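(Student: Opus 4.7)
The plan is to split the conjectured equality into two inequalities. The upper bound $\vec\chi(G\times H)\leq\min\{\vec\chi(G),\vec\chi(H)\}$ is the straightforward direction. Given a proper $k$-colouring $f_G$ of $G$, I would define $f(g,h):=f_G(g)$ on $V(G\times H)$ and observe that any monochromatic directed cycle $(g_1,h_1),\ldots,(g_r,h_r),(g_1,h_1)$ in $G\times H$ projects to a closed directed walk $g_1,\ldots,g_r,g_1$ in $G$ whose vertices all receive the single colour $f_G(g_1)$ and in which no two consecutive vertices coincide (because each $(g_i,g_{i+1})$ must be an arc of $G$ and digraphs have no loops). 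Such a walk contains a directed cycle, contradicting the properness of $f_G$. Symmetrically, $\vec\chi(G\times H)\leq\vec\chi(H)$.

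The lower bound $\vec\chi(G\times H)\geq\min\{\vec\chi(G),\vec\chi(H)\}$ is the substantive content of the conjecture. To attack it I would first develop a directed analogue of the exponential-graph method of El-Zahar and Sauer: starting from a putative proper $k$-colouring $f$ of $G\times H$ with $k<\min\{\vec\chi(G),\vec\chi(H)\}$, examine the family of ``row colourings'' $f_h(g):=f(g,h)$; since $\vec\chi(G)>k$, each $f_h$ admits a monochromatic directed cycle $C_h$ in $G$. The aim would be to assemble these cycles into a homomorphism from $H$ to a suitable ``dichromatic exponential digraph'' on vertex set $[k]^{V(G)}$, whose dichromatic number can be shown to be at most $k$, thereby forcing the contradiction $\vec\chi(H)\leq k$. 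In parallel I would attempt a fractional reduction, proving first a directed version of Zhu's theorem on the fractional Hedetniemi conjecture, which seems more tractable because acyclicity of colour classes behaves well under fractional refinements. For partial progress, the conjecture could be tested against the explicit classes already handled in the paper, combining Theorem~\ref{teo:Kneser} and Theorem~\ref{teo:Borsuk} with a box-complex-style construction adapted to tensor products.

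The main obstacle is the one that defeated researchers on the undirected Hedetniemi conjecture for half a century and ultimately turned out to be insurmountable: propagating a colouring of $G\times H$ back to a single factor is genuinely difficult, and in the directed setting it is further complicated by the need to preserve \emph{directed cycles} rather than mere edges under the coordinate projections. The source of hope is precisely this same feature: arcs of $G\times H$ force arcs in both factors simultaneously, so a directed cycle in $G\times H$ has a rigidly synchronised shadow in each of $G$ and $H$, much more restrictive than what undirected edges impose. Before investing fully in a proof, however, I would first try to transplant Shitov's counterexample to digraphs: if some orientation of his construction avoids monochromatic directed cycles in the tensor product while a factor retains large dichromatic number, then Conjecture~\ref{conj:directed_Hedetniemi} is false and the plan should pivot toward exhibiting such a counterexample; if every reasonable orientation fails, the failure itself should reveal which directed feature the proof must exploit.
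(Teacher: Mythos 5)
You have been handed a \emph{conjecture}, not a theorem: the paper does not prove this statement, and indeed it cannot, because the paper itself observes that the statement is false. Immediately after stating Conjecture~\ref{conj:directed_Hedetniemi}, the authors note that taking any counterexample to Hedetniemi's conjecture (Shitov's construction) and replacing every edge by a pair of opposite arcs yields a counterexample to the directed version: for bidirected digraphs the dichromatic number coincides with the chromatic number of the underlying undirected graph, and the tensor product of bidirections is the bidirection of the tensor product, so $\vec\chi(\tilde G\times\tilde H)=\chi(G\times H)<\min\{\chi(G),\chi(H)\}=\min\{\vec\chi(\tilde G),\vec\chi(\tilde H)\}$. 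The paper's update note further records that Picasarri-Arrieta has since refuted the conjecture even for \emph{oriented} graphs, so there is no loopless or orientation-restricted escape route.

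Your proposal correctly proves the easy inequality $\vec\chi(G\times H)\leq\min\{\vec\chi(G),\vec\chi(H)\}$ (the projection of a monochromatic directed cycle is a closed directed walk with no repeated consecutive vertices, hence contains a directed cycle), and your instinct at the end --- to transplant Shitov's counterexample before investing in the exponential-digraph machinery --- is precisely the right move and is, in essence, what the paper does. The concrete gap is that the plan for the lower bound $\vec\chi(G\times H)\geq\min\{\vec\chi(G),\vec\chi(H)\}$ is an attack on a false statement; no amount of refinement of the El-Zahar--Sauer or fractional-Zhu ideas can close it in full generality. You should abandon the ``assemble the row colourings into a homomorphism to an exponential digraph'' step, present the bidirected-Shitov construction as a counterexample, and, if you want positive content, restrict to the cases where the conjecture is known to hold (e.g.\ $\min\{\vec\chi(G),\vec\chi(H)\}\leq 2$, as cited in the paper) or to fractional variants.
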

Note that counterexamples to Conjecture~\ref{conj:directed_Hedetniemi} can be obtained by taking any counterexample to Hedetniemi's conjecture and replacing all its edges with bidirected arcs. But what happens if $G,H$ are oriented graphs? In \cite{Harutyunyan2011} it was proved that Conjecture~\ref{conj:directed_Hedetniemi} holds when $\min\{\vec\chi(G),\vec\chi(H)\}\leq 2$.

On the positive side, Zhu proved the fractional version of Hedetniemi's conjecture~\cite{Zhu11}. We wonder if this result can be generalized to the dichromatic number of digraphs, if the fractional dichromatic number of a digraph is defined in the natural way. 

\textit{Update note. After the completion of the first version of this paper, it came to our attention that Theorem~\ref{thm:sabidussi} has already been proved by Costa and Silva~\cite{CostaSilva2024}. The question of whether Conjecture~\ref{conj:directed_Hedetniemi} holds for oriented graphs has been recently answered in the negative by Picasarri-Arrieta~\cite{Picasarri-Arrieta2024}.}

\textbf{Acknowledgements.} We thank St\'{e}phane Bessy and St\'{e}phan 
Thomass\'{e} for many previous conversations around Conjecture \ref{conj:E-NL}, which
motivated us to start this paper. We also wish to thank Denis Cornaz for exciting and encouraging discussions
throughout the writing of this manuscript. The authors are supported by ANR grant
21-CE48-0012 DAGDigDec (DAGs and Digraph Decompositions). GPS was also partially supported by the Spanish Ministerio de Ciencia e Innovaci\'on through
grant PID2019-194844GB-I00.

\emph{This paper is an extended version of~\cite{HP2023}.}

\bibliographystyle{abbrv}
\bibliography{biblio}

\end{document}